\newtheorem{theorem}{Theorem}[section]
\newtheorem{prop}[theorem]{Proposition}
\journal{Numer. Meth. Part. D. E.}
\begin{document}

\begin{frontmatter}



\title{A priori error estimates of two fully discrete coupled schemes for Biot's consolidation model}


\author[sustech,nsccsz]{Huipeng Gu}
\author[morgan]{Mingchao Cai}
\author[sustech]{Jingzhi Li}
\author[nsccsz]{Guoliang Ju}

\address[sustech]{Department of Mathematics, Southern University of Science and Technology, Shenzhen, Guangdong 518055, China}
            
\address[morgan]{Department of Mathematics, Morgan State University, Baltimore, MD 21251, USA}
            
\address[nsccsz]{National Supercomputing Center in Shenzhen, Shenzhen, Guangdong 518055, China}
            

\begin{abstract}
This paper concentrates on a priori error estimates of two fully discrete coupled schemes for Biot's consolidation model based on the three-field formulation introduced by Oyarz{\'u}a et al. (SIAM Journal on Numerical Analysis, 2016). The spatial discretizations are based on the Taylor–Hood finite elements combined with Lagrange elements for the three primary variables. For the time discretization, we consider two methods. One uses the backward Euler method, and the other applies a combination of the backward Euler and Crank–Nicolson methods. A priori error estimates show that the two schemes are unconditionally convergent with optimal error orders. Detailed numerical experiments are presented to validate the theoretical analysis.
\end{abstract}



\begin{keyword}
A priori error estimates \sep Biot's model \sep finite element
\end{keyword}

\end{frontmatter}



\section{Introduction}
Biot's consolidation model \cite{biot1941general,biot1955theory} describes the interaction between
fluid flow and mechanical deformation of a porous medium, which is a solid structure
composed of pores. This model has wide applications in many scenarios including biomechanics \cite{ju2020parameter}, petroleum engineering \cite{kim2011stability}, etc. Since the exact analytic solution is hard to obtain, a variety of methods are carried out to approximate numerical solutions for the system: finite volume methods \cite{naumovich2006finite}, virtual element methods \cite{burger2021virtual}, and mixed finite element methods \cite{cai2015comparisons, lee2016robust}. 

In \cite{yi2017study}, it is pointed out that standard finite element methods for solving the classical two-field Biot's model may suffer from Poisson locking and pressure oscillations. Therefore, various reformulations of Biot's model are proposed to overcome these numerical difficulties. For example, fluid flux arising from the inherent Darcy law is introduced as a new variable to obtain a three-field formulation of Biot's model in \cite{yi2017study, hong2018parameter}. A new four-field formulation is proposed in \cite{yi2017iteratively}. Another three-field model is established in \cite{feng2018analysis} by introducing two pseudo-pressures. In \cite{oyarzua2016locking, lee2017parameter}, an intermediate variable, called ``total pressure", is introduced to derive a three-field reformulation of Biot's model. The advantages of such a three-field reformulation exist in that it avoids to use ${\bf H}(\mbox{div})$ space, and the classical inf-sup stable Stokes finite elements combined with Lagrange elements can be applied for the spatial discretization. By taking the advantage of such a three-field formulation, some relevant algorithms and analyses are carried out. For instance, Qi et al. \cite{qi2021finite} derive optimal-order error estimates. An unconditional convergent algorithm is proposed in \cite{lee2019unconditionally}. 

In this work, we study two fully discrete coupled schemes for solving the three-field formulation of Biot's model. In Method 1, the backward Euler method \cite{oyarzua2016locking, ju2020parameter, qi2021finite, burger2021virtual} is applied for the time discretization, which is only of first-order convergence in time. Inspired by \cite{lee2019unconditionally}, we come up with Method 2 by using a combination of the backward Euler and Crank–Nicolson methods for a second-order convergence in time. Different from the algorithm in \cite{lee2019unconditionally}, our algorithm applies a unified scheme for all time steps rather than uses different schemes for the first time step and the rest steps. Despite some existing analyses, we note that some works only assume that the Biot-Willis constant $\alpha = 1$ and the specific storage coefficient $c_0 = 0$. The only second order in time paper is \cite{lee2019unconditionally}. However, the analyses and numerical experiments there are not consistent. In this work, we conduct rigorous analysis for both Method 1 and Method 2. Optimal a priori estimates of the two methods are obtained by more careful usage of energy estimates and Gr{\"o}nwall's inequality. Furthermore, our analyses are valid for more general physical parameters. We comment here that most existing analyses only discuss 
the $L^2$ error estimates for the fluid pressure. In this work, we also provide the $H^1$ error analysis for the fluid pressure. 

The rest of the paper is organized as follows. In Section \ref{Sec2}, we present a three-field formulation of Biot's consolidation model and the corresponding weak formulation. In Section \ref{Numer}, we introduce finite element spaces, projection operators, two fully discrete coupled schemes, and some useful propositions. A priori estimates of Method 1 and Method 2 are given in Section \ref{Sec4}. Numerical experiments are carried out to validate the theoretical results in Section \ref{Sec5}. Conclusions and outlook are given in Section \ref{conclusion}.

\section{Mathematical formulations}
\label{Sec2}
Let $\Omega \subset \mathbb{R}^d$ ($d= 2$ or $3$) be a bounded polygonal domain with boundary $\partial \Omega$. The classical Sobolev spaces are denoted by  $H^k(\Omega)$ with norm $\| \cdot \|_{H^k(\Omega)}$. We denote $H^k_{0,\Gamma} (\Omega)$ for the subspace of $H^k(\Omega )$ with the vanishing trace on $\Gamma \subset \partial \Omega$, and use $( \cdot , \cdot )$ and $\langle \cdot , \cdot \rangle$ to denote the standard $L^2(\Omega)$ and $L^2(\partial \Omega)$ inner products, respectively. In this paper, we use $C$ to denote a generic positive constant independent of mesh sizes, and use $x \lesssim y$ to denote $x \leq Cy$. The governing equations describing the quasi-static Biot system are given as follows
\begin{align}
    - \mbox{div} \sigma (\bm{u}) +\alpha \nabla p & = \bm{f}, \label{twofield1} \\
    \partial_t (c_0 p + \alpha \mbox{div}\bm{u}) - \mbox{div} K (\nabla p - \rho_f \bm{g}) & = Q_s,  \label{twofield2}
\end{align}
where
\begin{align*}
	\sigma(\bm{u})=2\mu\varepsilon(\bm{u})+\lambda (\mbox{div}\bm{u})  \bm{I}, \ \ \ \varepsilon(\bm{u}) = \frac{1}{2} \left[ \nabla \bm{u} + (\nabla \bm{u})^T \right].
\end{align*}
Here, the primary unknowns are the displacement vector of the solid $\bm{u}$ and the fluid pressure $p$, the coefficient $\alpha > 0$ is the Biot-Willis constant which is close to 1, $\bm{f}$ is the body force, $c_0 \geq 0$ is the specific storage coefficient, $K$ represents the hydraulic conductivity, $\rho_f$ is the fluid density, $\bm{g}$ is the gravitational acceleration, $Q_s$ is a source or sink term, $\bm{I}$ is the identity matrix, and Lam{\'e} constants $\lambda$ and $\mu$ are computed from the Young's modulus $E$ and the Poisson ratio $\nu$:
\begin{align*}
	\lambda = \frac{E\nu}{(1+\nu)(1-2\nu)},\ \ \ \mu = \frac{E}{2(1+\nu)}.
\end{align*}
Suitable boundary and initial conditions should be provided to complete the system. Assuming that $\partial\Omega=\Gamma_d\cup\Gamma_t=\Gamma_p\cup\Gamma_f$ with $|\Gamma_d|>0$ and $|\Gamma_p|>0$, for the simplicity of presentation, we
consider the following boundary conditions:
\begin{align}
	\bm{u} = \bm{0}       \quad & \mbox{on} \ \Gamma_d, \label{bc1} \\
	\left( \sigma(\bm{u}) - \alpha p \bm{I} \right) \bm{n} = \bm{h}  \quad & \mbox{on} \ \Gamma_t,  \\
	p=0    \quad & \mbox{on} \ \Gamma_p,  \\
	K(\nabla p - \rho_f \bm{g}) \cdot \bm{n} = g_2    \quad & \mbox{on} \ \Gamma_f, \label{bc4}
\end{align}
where $\bm{n}$ is the unit outward normal to the boundary. We comment here that the discussion can be easily extended to nonhomogeneous boundary condition cases. For the initial conditions, we consider
\begin{align}
    \bm{u}(0) = \bm{u}^0, \ \ \    p(0) = p^0. \label{ic}
\end{align}

Next, we introduce an intermediate variable called ``total pressure":
$$\xi = \alpha p -\lambda \mbox{div} \bm{u}.$$ 
Then, \eqref{twofield1}-\eqref{twofield2} can be rewritten as the following three-field formulation of Biot's consolidation model
\begin{align}
    -2 \mu \mbox{div} ( \varepsilon (\bm{u})) + \nabla \xi & = \bm{f}, \label{threefield1} \\
    \mbox{div} \bm{u} + \frac{1}{\lambda} \xi - \frac{\alpha}{\lambda} p & = 0, \label{threefield2} \\
    \left( c_0 + \frac{\alpha^2}{\lambda} \right) \partial_t p - \frac{\alpha}{\lambda} \partial_t \xi - \mbox{div} K (\nabla p - \rho_f \bm{g}) & = Q_s.  \label{threefield3}
\end{align}
After the reformulation, we can still apply the boundary conditions \eqref{bc1}-\eqref{bc4} and initial conditions \eqref{ic} with $\xi(0) = \alpha p^0 - \lambda \mbox{div} \bm{u}^0$ here. For ease of presentation, we assume $\bm{g} = \bm{0}$ in the rest of the paper. 

Let $\bm{V} = \bm{H}^1_{0,\Gamma_d} (\Omega)$, $W = L^2(\Omega)$ and $M = H^1_{0,\Gamma_p} (\Omega)$, and define the bilinear forms 
\begin{align*}
    & a_1(\bm{u},\bm{v}) = 2\mu \int_\Omega  \varepsilon (\bm{u}) :  \varepsilon (\bm{v}),  \ \ \ b(\bm{v},\phi) =  \int_\Omega \phi \  \mbox{div}\bm{v}, \\
    & a_2(\xi,\phi) = \frac{1}{\lambda} \int_\Omega \xi \phi,  \quad \quad \quad \quad \quad \ c(p , \phi) =  \frac{\alpha}{\lambda} \int_\Omega p \phi, \\
    & a_3(p,\psi) = \left( c_0+\frac{\alpha^2}{\lambda} \right) \int_\Omega p \psi, \ \ \ d(p,\psi) =  K \int_\Omega \nabla p \cdot \nabla \psi.
\end{align*}
Multiplying \eqref{threefield1}-\eqref{threefield3} by test functions, integrating by parts and applying boundary conditions \eqref{bc1}-\eqref{bc4} yield the following variational problem: for a given $t \geq 0$, find $(\bm{u},\xi,p) \in \bm{V} \times W \times M$ such that
\begin{align}
    a_1(\bm{u},\bm{v})-b(\bm{v},\xi) & = ( \bm{f},\bm{v} ) + \langle \bm{h} , \bm{v} \rangle_{\Gamma_t} , \ \ \quad \quad \ \forall \bm{v} \in \bm{V}, \label{c1} \\
    b(\bm{u},\phi) + a_2(\xi,\phi) - c(p,\phi) & = 0, \ \ \quad \quad \quad \quad \quad \quad \quad \quad \quad \forall \phi \in W, \label{c2} \\
    a_3(\partial_t p,\psi)-c(\psi,\partial_t \xi)+d(p,\psi) & = ( Q_s,\psi ) + \langle g_2 , \psi \rangle_{\Gamma_f}, \quad \quad  \forall \psi \in M. \label{c3}
\end{align}
The well-posedness of problem \eqref{c1}-\eqref{c3} is established in \cite{oyarzua2016locking}. We note that the Korn's inequality \cite{korn1991nitsche} holds on $\bm{V}$, that is, there exists a constant $C_k = C_k(\Omega,\Gamma_d) > 0$ such that
\begin{align}
	\| \bm{v} \|_{H^1(\Omega)} \leq C_k \| \varepsilon(\bm{v}) \|_{L^2(\Omega)}, \quad \forall \bm{v} \in \bm{V}. \label{korncon}
\end{align}
Furthermore, the following inf-sup condition \cite{brenner1993nonconforming} holds: there exists a constant $\beta > 0$ depending only on $\Omega$ and $\Gamma_d$ such that
\begin{align*}
	\sup_{\bm{v}\in \bm{V}} \frac{  b(\bm{v},\phi) }{\| \bm{v} \|_{H^1(\Omega)}} \geq \beta \| \phi \|_{L^2(\Omega)}, \ \ \ \forall \phi \in W. 
\end{align*}

\section{Finite element discretization and numerical schemes}
\label{Numer}
Let $\mathcal{T}_h$ be a partition of the domain $\Omega$ into triangles in $\mathbb{R}^2$ or tetrahedra in $\mathbb{R}^3$, and $h$ be the maximum diameter over all elements in the mesh. We define finite element spaces on $\mathcal{T}_h$
\begin{align*} 
	& \bm{V}_{h} \coloneqq \{ \bm{v}_h \in \bm{H}^1_{0,\Gamma_d} (\Omega) \cap {\bm{C} }^0(\bar{\Omega}); \ \bm{v}_h |_E  \in {\bm P}_{k}(E), ~\forall E \in \mathcal{T}_h \},
	\\
	& W_h \coloneqq \{ \phi_h \in L^2(\Omega) \cap C^0(\bar{\Omega}); \ \phi_h |_E  \in  P_{k-1}(E), ~\forall E \in \mathcal{T}_h \},
	\\
	& M_h \coloneqq \{ \psi_h \in H^1_{0,\Gamma_p} (\Omega) \cap C^0(\bar{\Omega}); \ \psi_h |_E  \in P_{l}(E), ~\forall E \in \mathcal{T}_h  \},
\end{align*}
where $k \geq 2$ and $l \geq 1$ are two integers. In this work, the Taylor-Hood element, which consists of the pair $(\bm{V}_{h},W_h)$, and Lagrange finite element are adopted to the pair $(\bm{u},\xi)$ and $p$, respectively. Based on the three-field formulation \eqref{threefield1}-\eqref{threefield3} and discrete spaces $\bm{V}_h$, $W_h$, and $M_h$, we define two projection operators \cite{lee2019mixed,qi2021finite}. First, we introduce the Stokes projection operator $\bm{R}_{\bm{u}} \times R_{\xi} : \bm{V} \times W \rightarrow \bm{V}_h \times W_h$, which is defined by
\begin{align}
    a_1(\bm{R}_{\bm{u}} \bm{u} , \bm{v}_h) - b(\bm{v}_h, R_{\xi} \xi) & = a_1(\bm{u} , \bm{v}_h) - b(\bm{v}_h, \xi),  \quad \forall \bm{v}_h \in \bm{V}_h, \label{proj1}\\
    b(\bm{R}_{\bm{u}} \bm{u} , \phi_h) & = b( \bm{u} , \phi_h), \quad \quad \quad \quad \quad \quad \forall \phi_h \in W_h. \label{proj2}
\end{align}
 Second, we define the elliptic projection operator $R_p : M \rightarrow M_h$ as follows
\begin{align}
    d(R_p p , \psi_h) & = d(p,\psi_h), \quad \quad \quad \quad \forall \psi_h \in M_h. \label{proj3} 
\end{align}
If $\bm{u} \in \bm{H}^{k+1}_{0,\Gamma_d} (\Omega)$, $\xi \in H^{k}(\Omega)$, and $p \in H^{l+1}_{0,\Gamma_p} (\Omega)$, then the following error estimates hold true for the Stokes projection operator and the elliptic projection operator \cite{brenner2008mathematical}.
\begin{align}
    \|\bm{u}-\bm{R}_{\bm{u}} \bm{u}\|_{H^1(\Omega)} + \|\xi - R_{\xi} \xi\|_{L^2(\Omega)}
	& \leq C h^{k} (\|\bm{u}\|_{H^{{k}+1}(\Omega)} + \|\xi \|_{H^{{k}}(\Omega)}) \label{po1} , 
	\\
	\| p - R_{p} p \|_{H^1(\Omega)} & \leq C h^{l} \| p \|_{H^{{l}+1}(\Omega)}, \label{po2} 
	\\
	\| p - R_{p} p \|_{L^2(\Omega)}  & \leq C h^{l+1} \| p \|_{H^{{l}+1}(\Omega)}. \label{po3}
\end{align}
As we use a stable Stokes element pair, the corresponding finite element spaces satisfy the following discrete inf-sup condition, i.e., there exists a positive constant $\Tilde{\beta}$ independent of $h$ such that
\begin{align}
	\sup_{\bm{v}_h \in \bm{V}_h} \frac{  b(\bm{v}_h,\phi_h) }{\| \bm{v}_h \|_{H^1(\Omega)}} \geq \Tilde{\beta} \| \phi_h \|_{L^2(\Omega)}, \ \ \ \forall \phi_h \in W_h. \label{dinfsupcon}
\end{align}

An equidistant partition $0 = t_0 < t_1 < \cdots < t_{N+1} = T$ with a step size $\Delta t$ is considered for the time discretization. For simplicity, we introduce the notations $\bm{u}^n = \bm{u}(t_n)$, $\xi^n = \xi(t_n)$ and $p^n = p(t_n)$. Suitable approximation of initial conditions $\bm{u}^0_h = \bm{R_u} \bm{u}^0$, $\xi^0_h = R_{\xi} \xi^0$, and $p^0_h = R_p p^0$ is considered here. Following \cite{oyarzua2016locking,ju2020parameter,qi2021finite,burger2021virtual}, we present the first coupled scheme using the backward Euler method for the time discretization as follows.

\
\\
\noindent
\textbf{Method 1:} Given $(\bm{u}_h^{n},\xi_h^{n},p_h^{n}) \in \bm{V}_h \times W_h \times M_h$, find $(\bm{u}_h^{n+1},\xi_h^{n+1},p_h^{n+1}) \in \bm{V}_h \times W_h \times M_h$ such that
\begin{align}
    & a_1(\bm{u}_h^{n+1},\bm{v}_h)-b(\bm{v}_h,\xi_h^{n+1} ) = ( \bm{f}^{n+1},\bm{v}_h ) + \langle \bm{h}^{n+1} , \bm{v}_h \rangle_{\Gamma_t} , \quad  \forall \bm{v}_h \in \bm{V}_h,  \label{d1}
    \\
    & b(\bm{u}_h^{n+1},\phi_h)+a_2(\xi_h^{n+1},\phi_h) - c(p_h^{n+1},\phi_h ) = 0 
    , \quad \quad \quad \quad \quad \ \forall \phi_h \in W_h, \label{d2}  \\
    & a_3 \left( \frac{p_h^{n+1} - p_h^{n}}{\Delta t}, \psi_h \right )-c\left( \psi_h , \frac{ \xi_h^{n+1}-\xi_h^{n} }{\Delta t} \right)
    \nonumber \\
    &  \quad \quad \quad + d(p_h^{n+1},\psi_h) = ( Q_s^{n+1},\psi_h ) + \langle g_2^{n+1} , \psi_h \rangle_{\Gamma_f}, \quad \quad \quad  \forall \psi_h \in M_h. \label{d3}
\end{align}

The second coupled scheme considers a combination of the backward Euler and Crank–Nicolson methods for time discretization, which is inspired from \cite{lee2019unconditionally}. In the entire time interval, we solve the following problem.

\
\\
\noindent
\textbf{Method 2:} Given $(\bm{u}_h^{n},\xi_h^{n},p_h^{n}) \in \bm{V}_h \times W_h \times M_h$, find $(\bm{u}_h^{n+1},\xi_h^{n+1},p_h^{n+1}) \in \bm{V}_h \times W_h \times M_h$ such that
\begin{align}
    & a_1(\bm{u}_h^{n+1},\bm{v}_h)-b(\bm{v}_h,\xi_h^{n+1} ) = ( \bm{f}^{n+1},\bm{v}_h ) + \langle \bm{h}^{n+1} , \bm{v}_h \rangle_{\Gamma_t} , \quad  \forall \bm{v}_h \in \bm{V}_h,  \label{dd1}
    \\
    & b(\bm{u}_h^{n+1},\phi_h)+a_2(\xi_h^{n+1},\phi_h) - c(p_h^{n+1},\phi_h ) = 0 , \quad \quad \quad \quad \quad \ \forall \phi_h \in W_h, \label{dd2}  \\
    & a_3 \left( \frac{p_h^{n+1} - p_h^{n}}{\Delta t}, \psi_h \right )-c\left( \psi_h , \frac{ \xi_h^{n+1}-\xi_h^{n} }{\Delta t} \right) +  d \left(\frac{p_h^{n+1} + p_h^{n}}{2},\psi_h \right) \nonumber \\
    & \quad \quad \quad = \frac{1}{2}( Q_s^{n+1} +  Q_s^{n} ,\psi_h ) + \frac{1}{2} \langle  g_2^{n+1}+g_2^{n} , \psi_h \rangle_{\Gamma_f}, \quad  \quad \ \ \forall \psi_h \in M_h. \label{dd3}
\end{align}
We comment here that such a scheme does not complicate the calculations.
 
Next, we state the following basic propositions.
\begin{prop}
Let $f:\mathbb{R} \rightarrow \mathbb{R}$ be a function that has $k + 1$ continuous derivatives on an open interval $(a,b)$. For any $t_0, t \in (a,b)$, there holds
\begin{align*}
    f(t) = f(t_0) + f'(t_0)(t-t_0) + \cdots + \frac{f^{(k)}(t_0)}{k!}(t-t_0)^k + \frac{1}{k!} \int_{t_0}^t f^{(k+1)}(s) (t-s)^k ds.
\end{align*}
Then, the following estimate for the $L^2$-norm of the last term holds true.
\begin{align}
    \left\| \frac{1}{k!} \int_{t_0}^t f^{(k+1)}(s) (t-s)^k ds \right\|_{L^2(\Omega)}^2 \lesssim (b-a)^{2k+1} \left| \int_{t_0}^t \|f^{(k+1)}\|_{L^2(\Omega)}^2 ds \right|. \label{taylor}
\end{align}
\end{prop}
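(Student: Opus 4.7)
The Taylor identity itself is the standard formula with integral remainder, obtained by iterated integration by parts starting from $f(t) - f(t_0) = \int_{t_0}^t f'(s)\,ds$; I would just cite it as a classical result and concentrate on the norm bound \eqref{taylor}.

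For the estimate, my plan is to apply Cauchy--Schwarz in the time variable pointwise in $x \in \Omega$, and then use Fubini. Writing the remainder as $r(t,x) = \frac{1}{k!}\int_{t_0}^t f^{(k+1)}(s,x)(t-s)^k\,ds$, I would bound
\begin{align*}
|r(t,x)|^2 &\leq \frac{1}{(k!)^2}\left|\int_{t_0}^t (t-s)^{2k}\,ds\right| \cdot \left|\int_{t_0}^t |f^{(k+1)}(s,x)|^2\,ds\right| \\
&= \frac{|t-t_0|^{2k+1}}{(k!)^2(2k+1)} \left|\int_{t_0}^t |f^{(k+1)}(s,x)|^2\,ds\right|.
\end{align*}
Integrating this inequality over $\Omega$ and exchanging the order of integration (Fubini), the factor involving $f^{(k+1)}$ becomes exactly $|\int_{t_0}^t \|f^{(k+1)}(s)\|_{L^2(\Omega)}^2\,ds|$, while the prefactor $|t-t_0|^{2k+1}$ is controlled by $(b-a)^{2k+1}$ since $t,t_0 \in (a,b)$. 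Absorbing the dimensional constant $1/((k!)^2(2k+1))$ into the $\lesssim$ notation gives \eqref{taylor}.

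I do not anticipate any genuine obstacle here; the only minor point to be careful about is the orientation of the integral (one may have $t < t_0$), which is why the right-hand side of \eqref{taylor} is written with an absolute value around the time integral. Using $|\cdot|$ on the intermediate integrals as above handles this uniformly, so the argument works regardless of the sign of $t-t_0$.
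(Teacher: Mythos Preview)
Your proposal is correct and follows essentially the same route as the paper: cite the Taylor formula with integral remainder, then apply Cauchy--Schwarz in the time variable to obtain the factor $|t-t_0|^{2k+1}/(2k+1)$, and bound $|t-t_0|$ by $(b-a)$. If anything, your write-up is slightly more careful than the paper's, since you make the pointwise-in-$x$ application of Cauchy--Schwarz and the subsequent Fubini step explicit, and you keep absolute values so that the case $t<t_0$ is handled cleanly.
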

\begin{proof}
The first part of the conclusion comes from the Taylor expansion theorem. Using the Cauchy-Schwarz inequality, we have 
\begin{align*}
    \left| \int_{t_0}^t f^{(k+1)}(s) (t-s)^k ds \right|^2 & \leq \int_{t_0}^t \left| t-s \right|^{2k} ds \int_{t_0}^t \left| f^{(k+1)}(s) \right|^2 ds \\
    & \leq \frac{(t-t_0)^{2k+1}}{2k+1}  \int_{t_0}^t \left| f^{(k+1)}(s) \right|^2 ds,
\end{align*}
which implies \eqref{taylor} directly.
\end{proof}
\begin{prop} \label{inequality}
Let $B$ be a symmetric bilinear form, there holds
\begin{align}
    & 2B(u,u-v) = B(u,u) - B(v,v) + B(u-v,u-v), \label{beq} \\
    & B(u+v,u-v) = B(u,u) - B(v,v), \label{beq2}
\end{align}
which immediately implies the following inequality
\begin{align}
    2B(u,u-v) \geq B(u,u) - B(v,v). \label{bineq}
\end{align}
\end{prop}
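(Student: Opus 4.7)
The plan is to verify each of the identities \eqref{beq} and \eqref{beq2} by direct bilinear expansion, using symmetry of $B$ to collapse the cross terms, and then to read off \eqref{bineq} as an immediate consequence of \eqref{beq}. No machinery beyond bilinearity and symmetry is needed, so the argument will be short; the only thing requiring care is keeping track of signs and of which instances of $B(u,v)$ and $B(v,u)$ are being identified via symmetry.

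For \eqref{beq}, I would start on the right-hand side and expand $B(u-v,u-v)$ via bilinearity into four terms, then use $B(u,v)=B(v,u)$ to combine the two cross terms into $-2B(u,v)$. Adding $B(u,u)-B(v,v)$ to the result produces $2B(u,u) - 2B(u,v)$, which by bilinearity equals $2B(u,u-v)$, giving \eqref{beq}. For \eqref{beq2}, the calculation is even more immediate: expanding $B(u+v,u-v)$ into the four terms $B(u,u)$, $-B(u,v)$, $B(v,u)$, $-B(v,v)$ and applying symmetry cancels the cross terms outright.

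Finally, \eqref{bineq} follows from \eqref{beq} by discarding $B(u-v,u-v)$; this is legitimate because in every application of the proposition in later sections, $B$ will be one of the positive semi-definite forms $a_1$, $a_2$, $a_3$, or $d$, so $B(u-v,u-v)\geq 0$. There is no genuine obstacle in the proof; the only potential pitfall is forgetting that symmetry, not just bilinearity, is what makes the cross terms in the two expansions behave correctly, so I would state the use of symmetry explicitly at each step.
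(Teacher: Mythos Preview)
Your proposal is correct and is precisely the natural verification; the paper itself does not supply a proof for this proposition, treating both identities and the inequality as elementary facts. Your remark that \eqref{bineq} requires $B(u-v,u-v)\geq 0$, hence positive semi-definiteness of $B$, is a point the paper leaves implicit but which is indeed satisfied in every later application.
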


\section{Main results}
\label{Sec4}
In this section, we present our main results describing a priori error estimates of the proposed schemes. Here, we decompose error terms as
\begin{align*}
	& e_{\bm{u}}^n = {\bm{u}}^n - \bm{u}_h^n = ({\bm{u}}^n - \bm{R}_{\bm{u}} {\bm{u}}^n) +( \bm{R}_{\bm{u}} {\bm{u}}^n - \bm{u}_h^n) \eqqcolon e_{\bm{u}}^{I,n} + e_{\bm{u}}^{h,n}, \\
	& e_{\xi}^n    = {\xi}^n    - \xi_h^n    = ({\xi}^n - R_{\xi}{\xi}^n) + (R_{\xi}{\xi}^n  - \xi_h^n) \eqqcolon e_{\xi}^{I,n} + e_{\xi}^{h,n}, \\
	& e_{p}^n      = {p}^n      - p_h^n      = ({p}^n - R_p {p}^n) + ( R_p {p}^n   - p_h^n) \eqqcolon e_{p}^{I,n} + e_{p}^{h,n}.
\end{align*}
We also define
\begin{align*}
    D_{\bm{u}}^{n+1} \coloneqq e_{\bm{u}}^{h,n+1} - e_{\bm{u}}^{h,n}, \ \ \ 
    D_{\xi}^{n+1} \coloneqq  e_{\xi}^{h,n+1} - e_{\xi}^{h,n}, \ \ \ 
    D_{p}^{n+1} \coloneqq e_{p}^{h,n+1} - e_{p}^{h,n}.
\end{align*}
Then, we present a priori error estimates for both schemes. The proof of each method consists of three parts. In the first part, we focus on the $\bm{H}^1$ norm of $e_{\bm{u}}^{h,N+1}$, $L^2$ norm of $e_{\xi}^{h,N+1}$, and $L^2$ norm of $e_p^{h,N+1}$. In the second part, we complete the $H^1$ norm estimate of $e_p^{h,N+1}$. In the third part, we draw our main conclusions describing the error bound. 

\subsection{A priori error estimates for Method 1.}

\begin{theorem}
\label{THM}
Let $(\bm{u},\xi,p)$ and $(\bm{u}^{n+1}_h,\xi^{n+1}_h,p^{n+1}_h)$ be the solutions of Eqs. \eqref{c1}-\eqref{c3} and Eqs. \eqref{d1}-\eqref{d3}, respectively. Assume that $\bm{u} \in L^{\infty}(0,T;\bm{H}^{k+1}_{0,\Gamma_d} (\Omega))$, $\partial_{t}\bm{u} \in L^{2}(0,T ;\bm{H}^{k+1}_{0,\Gamma_d} (\Omega))$, $\partial_{tt}\bm{u} \in L^{2}(0,T;\bm{H}_{0,\Gamma_d}^{1}(\Omega))$, $\xi \in L^{\infty}(0,T;H^{k}(\Omega))$, $\partial_t \xi \in L^{2}(0,T;$ $H^{k}(\Omega))$, $\partial_{tt} \xi \in L^{2}(0,T;L^{2}(\Omega))$, $p \in L^{\infty}(0,T;H^{l+1}_{0,\Gamma_p}(\Omega))$, $\partial_{t} p \in L^{2}(0,T;H^{l+1}_{0,\Gamma_p} (\Omega))$, $\partial_{tt} p \in L^{2}(0,T;L^{2}(\Omega))$. There holds
\begin{align}
    & \| \varepsilon ( e_{\bm{u}}^{h,N+1})\|_{L^2(\Omega)}^2 + \| e_{\xi}^{h,N+1} \|_{L^2(\Omega)}^2  
    + \| e_{p}^{h,N+1} \|_{L^2(\Omega)}^2
    + \Delta t \sum_{n=0}^N \| \nabla e_p^{h,n+1} \|_{L^2(\Omega)}^2
    \nonumber \\
    & \lesssim 
    (\Delta t)^2 \int_{0}^{T} \left( \|\partial_{tt}\bm{u}\|^2_{H^1(\Omega)} + \|\partial_{tt}\xi \|^2_{L^2(\Omega)} + \|\partial_{tt}p \|^2_{L^2(\Omega)} \right) ds
    \nonumber \\
    & \quad + h^{2k} \int_{0}^{T}\left( \|\partial_{t} \bm{u} \|^2_{H^{k+1}(\Omega)} + \|\partial_{t} \xi \|^2_{H^{k}(\Omega)} \right)ds + h^{2l+2}  \int_{0}^{T}\|\partial_{t} p \|^2_{H^{l+1}(\Omega)}ds. \label{thmcon}
\end{align}
\end{theorem}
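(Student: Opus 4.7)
The plan is to bound the discrete error components $e_{\bm{u}}^{h,N+1}, e_\xi^{h,N+1}, e_p^{h,N+1}$ by energy methods, since the interpolation components $e^{I,n+1}$ are already controlled by (\ref{po1})--(\ref{po3}). First I would derive the error equations by subtracting Method 1 from the continuous variational problem (\ref{c1})--(\ref{c3}) evaluated at $t_{n+1}$, then decompose $e=e^{I}+e^{h}$ for each variable. The projection orthogonalities (\ref{proj1})--(\ref{proj3}) eliminate the interpolation contributions in $a_1$, $b$, and $d$, leaving only the $L^2$-type bilinear forms $a_2$, $c$, $a_3$ acting on interpolation errors as right-hand side data. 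For the third error equation, I insert the backward-Euler truncation errors $T_p^{n+1} := \partial_t p^{n+1} - (p^{n+1}-p^n)/\Delta t$ and $T_\xi^{n+1}$ analogously, which by the Taylor bound \eqref{taylor} satisfy $\|T_p^{n+1}\|_{L^2(\Omega)}^2 \lesssim \Delta t\int_{t_n}^{t_{n+1}}\|\partial_{tt}p\|_{L^2(\Omega)}^2\,ds$ and similarly for $T_\xi^{n+1}$. The discrete differences of interpolation errors such as $(e_p^{I,n+1}-e_p^{I,n})/\Delta t$ are then handled by combining \eqref{po3} with the fundamental theorem of calculus applied to $\partial_t p$.

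Next I would build the energy identity by choosing test functions so that all mixed $b$- and $c$-coupling terms cancel after addition. The plan is to test the first error equation with $\bm{v}_h = D_{\bm{u}}^{n+1}$, to time-difference the second error equation and test with $\phi_h = e_\xi^{h,n+1}$ (annihilating the $b$-mixed term from the first), and to test the third error equation with $\psi_h = \Delta t\, e_p^{h,n+1}$. Applying the symmetric-form inequality \eqref{bineq} from Proposition \ref{inequality} to each $a_i(e^{h,n+1}, D^{n+1})$ converts it into a telescoping increment $\tfrac12(a_i(e^{h,n+1},e^{h,n+1}) - a_i(e^{h,n},e^{h,n}))$, while the $d$-term yields exactly the $\Delta t\,\|\nabla e_p^{h,n+1}\|_{L^2(\Omega)}^2$ contribution appearing in the target estimate. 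Summing $n=0,\ldots,N$ telescopes the diagonal terms, and since $\bm{u}_h^0 = \bm{R}_{\bm{u}}\bm{u}^0$, $\xi_h^0 = R_\xi\xi^0$, $p_h^0 = R_p p^0$, the initial discrete errors vanish. On the right-hand side I apply Cauchy--Schwarz and Young's inequality to split each source term into a small multiple of LHS quantities (absorbed) plus a safe residual estimated via \eqref{po1}--\eqref{po3} and \eqref{taylor}. This produces an inequality in which the $h^{2k}$, $h^{2l+2}$, and $(\Delta t)^2$ terms match the theorem's right-hand side.

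The main obstacle I anticipate is the factor $1/\lambda$ in $a_2$: its coercivity is not uniform in $\lambda$, so it cannot by itself deliver $\|e_\xi^{h,N+1}\|_{L^2(\Omega)}$. I would circumvent this by invoking the discrete inf-sup condition \eqref{dinfsupcon} applied to the first error equation at time $t_{N+1}$, which gives $\|e_\xi^{h,N+1}\|_{L^2(\Omega)} \lesssim \sup_{\bm{v}_h}b(\bm{v}_h, e_\xi^{h,N+1})/\|\bm{v}_h\|_{H^1(\Omega)} = \sup_{\bm{v}_h}a_1(e_{\bm{u}}^{h,N+1},\bm{v}_h)/\|\bm{v}_h\|_{H^1(\Omega)} \lesssim \|\varepsilon(e_{\bm{u}}^{h,N+1})\|_{L^2(\Omega)}$, together with Korn's inequality \eqref{korncon} where needed on the $\bm{v}_h$-side. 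A second subtlety is that the cross terms produced by the $c$-coupling leave residual summations $\Delta t\sum_n \|e_*^{h,n}\|_{L^2(\Omega)}^2$ on the right, and these must be absorbed by a discrete Grönwall argument to yield an estimate independent of $\Delta t$, explaining the unconditional convergence. Finally, the triangle inequality combined with (\ref{po1})--(\ref{po3}) converts the bound on $e^{h,N+1}$ into the stated bound on the full error $e^{N+1}$.
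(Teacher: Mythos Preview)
Your overall strategy matches the paper's proof: derive the error equations using the projection orthogonalities \eqref{proj1}--\eqref{proj3}, test with $\bm{v}_h=D_{\bm{u}}^{n+1}$, $\phi_h=e_\xi^{h,n+1}$ (in the time-differenced second equation), $\psi_h=e_p^{h,n+1}$, telescope via \eqref{bineq}, recover $\|e_\xi^{h,N+1}\|_{L^2}$ through the discrete inf-sup \eqref{dinfsupcon}, and close with discrete Gr{\"o}nwall. The truncation and projection-error bookkeeping you describe is also exactly what the paper does (its terms $E_1,\ldots,E_5$).

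There is one point where your description is loose enough that it could become a gap. You say the test functions are chosen ``so that all mixed $b$- and $c$-coupling terms cancel after addition'' and then propose applying \eqref{bineq} ``to each $a_i(e^{h,n+1},D^{n+1})$''. The $b$-terms do cancel, but the two $c$-terms on the left, namely $-c(D_p^{n+1},e_\xi^{h,n+1})$ and $-c(e_p^{h,n+1},D_\xi^{n+1})$, do \emph{not} cancel, and they cannot be pushed to the right as ``$\Delta t\sum_n\|e_*^{h,n}\|^2$'' residuals because they contain the increments $D_p^{n+1},D_\xi^{n+1}$, which your left-hand side does not control. The paper's remedy is the algebraic identity
\[
a_2(D_\xi,e_\xi^{h})-c(D_p,e_\xi^{h})+a_3(D_p,e_p^{h})-c(e_p^{h},D_\xi)
=\tfrac{1}{\lambda}\big(\alpha D_p-D_\xi,\ \alpha e_p^{h}-e_\xi^{h}\big)+c_0\big(D_p,e_p^{h}\big),
\]
which merges the four terms into two symmetric positive forms; only then does \eqref{bineq} yield the telescoping increments $\tfrac{1}{\lambda}\|\alpha e_p^{h,n+1}-e_\xi^{h,n+1}\|^2$ and $c_0\|e_p^{h,n+1}\|^2$. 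Once you insert this step, the rest of your plan (Young's inequality on $E_1$--$E_5$, the inf-sup bound $\|e_\xi^{h,n+1}\|\lesssim\|\varepsilon(e_{\bm{u}}^{h,n+1})\|$ to feed Gr{\"o}nwall, and the vanishing discrete initial errors) goes through exactly as in the paper. Note also that the stated theorem bounds only the discrete components $e^{h,N+1}$; the triangle-inequality step you mention at the end belongs to the subsequent Theorem~\ref{THM33}.
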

\begin{proof}
Setting $t=t^{n+1}$ in \eqref{c1}, \eqref{c2}, \eqref{c3}, and letting the test functions be the discrete test functions, then subtracting \eqref{d1}, \eqref{d2}, and \eqref{d3} from these equations, we immediately derive the following error equations.
\begin{align*}
    & a_1(e_{\bm{u}}^{n+1},\bm{v}_h) - b(\bm{v}_h,e_{\xi}^{n+1}) = 0,
    \\
    & b(e_{\bm{u}}^{n+1},\phi_h)+a_2(e_{\xi}^{n+1},\phi_h)
    - c( e_p^{n+1} , \phi_h ) = 0,
    \\
    & a_3\left( \partial_t p^{n+1} - \frac{p^{n+1}_h - p^{n}_h}{\Delta t},\psi_h\right) - c\left( \psi_h, \partial_t {\xi}^{n+1}- \frac{\xi^{n+1}_h - \xi^{n}_h}{\Delta t}\right) + d(e_p^{n+1},\psi_h) = 0.
\end{align*}
By using the assumptions of the projection operators \eqref{proj1}, \eqref{proj2}, and \eqref{proj3}, the above equations can be rewritten as
\begin{align}
    & a_1(e_{\bm{u}}^{h,n+1},\bm{v}_h) - b(\bm{v}_h,e_{\xi}^{h,n+1}) = 0, \label{prop2eq1}
     \\
    & b(e_{\bm{u}}^{h,n+1},\phi_h)+a_2(e_{\xi}^{n+1},\phi_h) - c( e_p^{n+1} , \phi_h ) = 0 , \label{prop2eq2}
    \\
    & a_3( D_p^{n+1},\psi_h) - c( \psi_h , D_{\xi}^{n+1}) + \Delta t d( e_p^{h,n+1}, \psi_h )  
    \nonumber \\
    & = a_3(R_p {p}^{n+1} - R_p{p}^n - \Delta t \partial_t {p}^{n+1} ,\psi_h)  - c(\psi_h,R_{\xi} {\xi}^{n+1} - R_{\xi}{\xi}^n - \Delta t \partial_t {\xi}^{n+1}). \label{prop2eq5}
\end{align}
Differentiating \eqref{c2} with respect to $t$ at the $(n+1)$-th time step, then multiplying the resulted equation by $\Delta t$, we derive that
\begin{align} 
    b( \Delta t \partial_t {\bm{u}}^{n+1} ,\phi_h) + a_2( \Delta t \partial_t {\xi}^{n+1},\phi_h) - c( \Delta t \partial_t {p}^{n+1} , \phi_h ) = 0. \label{partial}
\end{align}
For \eqref{prop2eq2}, we write out the schemes for $t=t_{n+1}$ and $t=t_n$, then take a difference between the two resulted equations, we see that 
\begin{align}
    b(D_{\bm{u}}^{n+1} ,\phi_h)+a_2(e_{\xi}^{n+1} - e_{\xi}^{n},\phi_h) - c( e_p^{n+1} - e_p^{n},\phi_h)
    = 0. \label{refo}
\end{align}
Using the definitions of $D_{\xi}^{n+1}$ and $D_p^{n+1}$, we can reformulate \eqref{refo} as follows
\begin{align}
    & b(D_{\bm{u}}^{n+1} ,\phi_h)+a_2( D_{\xi}^{n+1},\phi_h) - c( D_{p}^{n+1},\phi_h) 
    \nonumber \\
    & = - a_2( {\xi}^{n+1} - {\xi}^{n},\phi_h) + c( p^{n+1} - p^{n},\phi_h)
    \nonumber \\
    & \quad + a_2(R_{\xi} {\xi}^{n+1}-R_{\xi} {\xi}^{n},\phi_h) - c( R_p p^{n+1} - R_p p^{n},\phi_h). \label{refo1}
\end{align}
After using the $(n+1)$-th and $n$-th time steps of \eqref{c2} to obtain $b({\bm{u}}^{n+1}-  {\bm{u}}^{n}, \phi_h) = - a_2( {\xi}^{n+1} - {\xi}^{n},\phi_h) + c( p^{n+1} - p^{n},\phi_h)$, we combine \eqref{partial} and \eqref{refo1} to get
\begin{align}
    & b(D_{\bm{u}}^{n+1} ,\phi_h)+a_2(D_{\xi}^{n+1},\phi_h) - c( D_p^{n+1},\phi_h)
    = b( {\bm{u}}^{n+1}-  {\bm{u}}^{n} - \Delta t \partial_t {\bm{u}}^{n+1} ,\phi_h) 
    \nonumber \\ 
    & \ \ \ + a_2(R_{\xi} {\xi}^{n+1}-R_{\xi} {\xi}^{n}- \Delta t \partial_t {\xi}^{n+1},\phi_h)
    - c( R_p p^{n+1} - R_p p^{n}- \Delta t \partial_t {p}^{n+1} , \phi_h ) . \label{wintercon}
\end{align}
Choosing $\bm{v}_h = D_{\bm{u}}^{n+1}$ in \eqref{prop2eq1}, $\phi_h = e_{\xi}^{h,n+1}$ in \eqref{wintercon}, and $\psi_h = e_p^{h,n+1}$ in \eqref{prop2eq5}, we derive
\begin{align}
    & a_1( e_{\bm{u}}^{h,n+1} , D_{\bm{u}}^{n+1}) - b(D_{\bm{u}}^{n+1},e_{\xi}^{h,n+1}) = 0, \label{pr4eq1} \\
    & b( D_{\bm{u}}^{n+1} ,e_{\xi}^{h,n+1})+a_2(D_{\xi}^{n+1},e_{\xi}^{h,n+1}) - c(D_{p}^{n+1},e_{\xi}^{h,n+1})
    \nonumber \\
    & = b( {\bm{u}}^{n+1}-  {\bm{u}}^{n} - \Delta t \partial_t {\bm{u}}^{n+1} ,e_{\xi}^{h,n+1})
    + a_2(R_{\xi} {\xi}^{n+1}-R_{\xi} {\xi}^{n}- \Delta t \partial_t {\xi}^{n+1},e_{\xi}^{h,n+1}) 
    \nonumber \\  
    & \quad - c(  R_p p^{n+1} - R_p p^{n}- \Delta t \partial_t {p}^{n+1},e_{\xi}^{h,n+1} ), \label{pr4eq11}
    \\
    & a_3( D_p^{n+1} ,e_p^{h,n+1}) - c( e_p^{h,n+1},D_{\xi}^{n+1} ) + \Delta t d(e_p^{h,n+1},e_p^{h,n+1}) 
    \nonumber \\
    & = a_3(R_p {p}^{n+1} - R_p{p}^n - \Delta t \partial_t {p}^{n+1} , e_p^{h,n+1} ) 
    \nonumber \\
    & \quad - c( e_p^{h,n+1} , R_{\xi} {\xi}^{n+1} - R_{\xi}{\xi}^n - \Delta t \partial_t {\xi}^{n+1}). \label{pr4eq2}
\end{align}
Taking the summation of \eqref{pr4eq1}, \eqref{pr4eq11}, and \eqref{pr4eq2} over the index $n$ from $0$ to $N$ yields
\begin{align}
    \text{LHS}_1 \coloneqq & \ \sum_{n=0}^N \bigg[ a_1( e_{\bm{u}}^{h,n+1} ,D_{\bm{u}}^{n+1}) +a_2(D_{\xi}^{n+1},e_{\xi}^{h,n+1}) -c(D_{p}^{n+1},e_{\xi}^{h,n+1})
    \nonumber \\
    & + a_3( D_p^{n+1} ,e_p^{h,n+1}) - c( e_p^{h,n+1},D_{\xi}^{n+1} ) + \Delta t d(e_p^{h,n+1},e_p^{h,n+1}) \bigg] = \sum_{i=1}^5 E_i, \label{LHS}
\end{align}
where
\begin{align*}
    & E_1 = \sum_{n=0}^N b( {\bm{u}}^{n+1}-  {\bm{u}}^{n} - \Delta t \partial_t {\bm{u}}^{n+1} ,e_{\xi}^{h,n+1}), \\
    & E_2 = \sum_{n=0}^N a_2(R_{\xi} {\xi}^{n+1}-R_{\xi} {\xi}^{n}- \Delta t \partial_t {\xi}^{n+1},e_{\xi}^{h,n+1}), \\ 
    & E_3 = \sum_{n=0}^N c( \Delta t \partial_t {p}^{n+1} - R_p p^{n+1} + R_p p^{n} , e_{\xi}^{h,n+1}), 
    \\
    & E_4 = \sum_{n=0}^N a_3( R_p {p}^{n+1} - R_p{p}^n - \Delta t \partial_t {p}^{n+1} , e_p^{h,n+1} ), \\
    & E_5 = \sum_{n=0}^N c( e_p^{h,n+1} , \Delta t \partial_t {\xi}^{n+1} - R_{\xi} {\xi}^{n+1} + R_{\xi}{\xi}^n ).
\end{align*}
Using the definitions of $a_2(\cdot,\cdot)$, $a_3(\cdot,\cdot)$, and $c(\cdot,\cdot)$, we can simplify $\text{LHS}_1$ by the identity
\begin{align}
    & a_2( D_{\xi}^{n+1},e_{\xi}^{h,n+1}) -c(D_{p}^{n+1},e_{\xi}^{h,n+1})+ a_3( D_p^{n+1} ,e_p^{h,n+1}) - c( e_p^{h,n+1},D_{\xi}^{n+1})
    \nonumber \\
    & = \frac{1}{\lambda} \int_{\Omega} (\alpha D_p^{n+1} - D_{\xi}^{n+1}) (\alpha e_p^{h,n+1} - e_{\xi}^{h,n+1} ) + c_0 \int_{\Omega} (e_p^{h,n+1} - e_p^{h,n}) e_p^{h,n+1}.
    \label{Thmid}
\end{align}
Applying \eqref{bineq} and \eqref{Thmid}, we obtain the following lower bound estimate for $\text{LHS}_1$ 
\begin{align}
    & \frac{1}{2} \big( 
    2\mu \| \varepsilon ( e_{\bm{u}}^{h,N+1})\|_{L^2(\Omega)}^2 - 2\mu \| \varepsilon ( e_{\bm{u}}^{h,0})\|_{L^2(\Omega)}^2
    +
    c_0\|e_{p}^{h,N+1}\|_{L^2(\Omega)}^2 
    \nonumber \\ 
    & \quad - c_0\|e_{p}^{h,0}\|_{L^2(\Omega)}^2 
    + \frac{1}{\lambda}\|\alpha e_{p}^{h,N+1} - e_{\xi}^{h,N+1}\|_{L^2(\Omega)}^2 
    \nonumber \\
    & \quad - \frac{1}{\lambda}\|\alpha e_{p}^{h,0} - e_{\xi}^{h,0}\|_{L^2(\Omega)}^2 
    \big) 
    + K \Delta t \sum_{n=0}^N \| \nabla e_p^{h,n+1} \|_{L^2(\Omega)}^2 
    \leq \text{LHS}_1.
    \label{LHS2}
\end{align}

Next, we bound the terms $E_i$ for $i=1, 2, \cdots, 5$. We use the Cauchy-Schwarz inequality, the Young's inequality, \eqref{po1}, \eqref{po3} and \eqref{taylor} to estimate $E_1$, $E_2$, and $E_3$ with an $\epsilon_1 > 0$ as follows.
\begin{align*}
    E_1  & \leq 
    \frac{\epsilon_1}{6} \Delta t \sum_{n=0}^N  \|e_{\xi}^{h,n+1}\|_{L^2(\Omega)}^2 + \frac{C}{2\epsilon_1} (\Delta t)^2 \int_{0}^{T}\|\partial_{tt}\bm{u}\|^2_{H^1(\Omega)}ds,
    \nonumber \\
    E_2 & \leq 
    \frac{\epsilon_1}{6} \Delta t \sum_{n=0}^N  \|e_{\xi}^{h,n+1}\|_{L^2(\Omega)}^2 + \frac{C}{2\epsilon_1} \bigg[ (\Delta t)^2 \int_{0}^{T}\|\partial_{tt}\xi \|^2_{L^2(\Omega)}ds 
    \\ & \quad \quad \quad \quad \quad \quad \quad \quad \quad \quad \quad \quad \quad \ + h^{2k} \int_{0}^{T} \left( \|\partial_{t} \bm{u} \|^2_{H^{k+1}(\Omega)} + \|\partial_{t} \xi \|^2_{H^{k}(\Omega)} \right) ds \bigg],
    \nonumber \\
    E_3 & \leq \frac{\epsilon_1}{6} \Delta t \sum_{n=0}^N  \|e_{\xi}^{h,n+1}\|_{L^2(\Omega)}^2 + \frac{C}{2\epsilon_1} \bigg[ (\Delta t)^2 \int_{0}^{T}\|\partial_{tt}p \|^2_{L^2(\Omega)}ds   + h^{2l+2} \int_{0}^{T} \|\partial_{t} p \|^2_{H^{l+1}(\Omega)} ds \bigg].
\end{align*}
Using the Cauchy-Schwarz inequality, the Young's inequality, the Poincar{\'e} inequality, \eqref{po1}, \eqref{po3}, and \eqref{taylor},  we can bound $E_4$ and $E_5$ with an $\epsilon_2>0$ as follows.
\begin{align*}
    E_4 & \leq 
    \frac{\epsilon_2}{4} \Delta t \sum_{n=0}^N \|\nabla e_{p}^{h,n+1}\|_{L^2(\Omega)}^2 + \frac{C}{2\epsilon_2} \bigg[ (\Delta t)^2 \int_{0}^{T}\|\partial_{tt}p \|^2_{L^2(\Omega)}ds +  h^{2l+2} \int_{0}^{T}\|\partial_{t} p \|^2_{H^{l+1}(\Omega)}ds \bigg], 
    \\
    E_5 & \leq 
    \frac{\epsilon_2}{4} \Delta t \sum_{n=0}^N \|\nabla e_{p}^{h,n+1}\|_{L^2(\Omega)}^2 + \frac{C}{2\epsilon_2} \bigg[ (\Delta t)^2 \int_{0}^{T}\|\partial_{tt}\xi \|^2_{L^2(\Omega)}ds
    \\ & \quad \quad \quad \quad \quad \quad \quad \quad \quad \quad \quad \quad \quad \quad + h^{2k} \int_{0}^{T} \left( \|\partial_{t} \bm{u} \|^2_{H^{k+1}(\Omega)} + \|\partial_{t} \xi \|^2_{H^{k}(\Omega)} \right) ds \bigg].
\end{align*}
Combining \eqref{LHS}, \eqref{LHS2}, and the bounds $E_i$ for $i=1, 2, \cdots, 5$, we derive that
\begin{align}
    & 2\mu \| \varepsilon ( e_{\bm{u}}^{h,N+1})\|_{L^2(\Omega)}^2 - 2\mu \| \varepsilon  (e_{\bm{u}}^{h,0}) \|_{L^2(\Omega)}^2 + c_0\|e_{p}^{h,N+1}\|_{L^2(\Omega)}^2 - c_0\|e_{p}^{h,0}\|_{L^2(\Omega)}^2 
    \nonumber \\
    & \quad + \frac{1}{\lambda}\|\alpha e_{p}^{h,N+1} - e_{\xi}^{h,N+1}\|_{L^2(\Omega)}^2 - \frac{1}{\lambda}\|\alpha e_{p}^{h,0} - e_{\xi}^{h,0}\|_{L^2(\Omega)}^2 
    + 2K \Delta t \sum_{n=0}^N \| \nabla e_p^{h,n+1} \|_{L^2(\Omega)}^2
    \nonumber \\
    & \leq \epsilon_1 \Delta t \sum_{n=0}^N \|e_{\xi}^{h,n+1}\|_{L^2(\Omega)}^2 + \epsilon_2 \Delta t \sum_{n=0}^N \|\nabla e_{p}^{h,n+1}\|_{L^2(\Omega)}^2
    \nonumber \\
    & \quad + \left( \frac{C}{\epsilon_1} + \frac{C}{\epsilon_2} \right) \bigg[ (\Delta t)^2 \int_{0}^{T} \left( \|\partial_{tt}\bm{u}\|^2_{H^1(\Omega)} + \|\partial_{tt}\xi \|^2_{L^2(\Omega)} + \|\partial_{tt}p \|^2_{L^2(\Omega)} \right) ds
    \nonumber \\
    & \quad + h^{2k} \int_{0}^{T}\left( \|\partial_{t} \bm{u} \|^2_{H^{k+1}(\Omega)} + \|\partial_{t} \xi \|^2_{H^{k}(\Omega)} \right)ds + h^{2l+2}  \int_{0}^{T}\|\partial_{t} p \|^2_{H^{l+1}(\Omega)}ds \bigg].
    \label{treat1}
\end{align}
Using the inf-sup condition \eqref{dinfsupcon}, \eqref{prop2eq1}, and the Korn's inequality \eqref{korncon}, we have
\begin{align}
    \|e_{\xi}^{h,n+1}\|_{L^2(\Omega)}^2 \lesssim \sup_{\bm{v}_h \in \bm{V}_h} \frac{  b(\bm{v}_h,e_{\xi}^{h,n+1}) }{\| \bm{v}_h \|_{H^1(\Omega)}} = \sup_{\bm{v}_h \in \bm{V}_h} \frac{  a_1(e_{\bm{u}}^{h,n+1},\bm{v}_h) }{\| \bm{v}_h \|_{H^1(\Omega)}} \lesssim \|\varepsilon(e_{\bm{u}}^{h,n+1})\|_{L^2(\Omega)}^2, \label{infcon}
\end{align}
which easily implies that
\begin{align}
    \|e_p^{h,n+1}\|_{L^2(\Omega)}^2 \lesssim \|\alpha e_p^{h,n+1} - e_{\xi}^{h,n+1} \|_{L^2(\Omega)}^2 + \| \varepsilon(e_{\bm{u}}^{h,n+1}) \|_{L^2(\Omega)}^2. \label{infcon2}
\end{align}
Then, we handle \eqref{treat1}. Considering the fact $\varepsilon  (e_{\bm{u}}^{h,0}) = 0$, $e_{p}^{h,0}=0$, $e_{\xi}^{h,0} = 0$, ignoring the term $c_0\|e_{p}^{h,N+1}\|_{L^2(\Omega)}^2$, using \eqref{infcon} to choose large enough positive $\epsilon_1$ such that $\epsilon_1 \|e_{\xi}^{h,k+1}\|_{L^2(\Omega)}^2 \leq 2 \mu \|\varepsilon(e_{\bm{u}}^{h,k+1})\|_{L^2(\Omega)}^2$ and setting $\epsilon_2 = K$, we can apply the discrete Gr{\"o}nwall's inequality to obtain
\begin{align}
    & 2\mu \| \varepsilon ( e_{\bm{u}}^{h,N+1})\|_{L^2(\Omega)}^2 
    + \frac{1}{\lambda} \|\alpha e_{p}^{h,N+1} - e_{\xi}^{h,N+1}\|_{L^2(\Omega)}^2 
    + K \Delta t \sum_{n=0}^N \| \nabla e_p^{h,n+1} \|_{L^2(\Omega)}^2
    \nonumber \\
    & \lesssim 
    (\Delta t)^2 \int_{0}^{T} \left( \|\partial_{tt}\bm{u}\|^2_{H^1(\Omega)} + \|\partial_{tt}\xi \|^2_{L^2(\Omega)} +  \|\partial_{tt}p \|^2_{L^2(\Omega)} \right) ds
    \nonumber \\
    & \ \ \ + h^{2k} \int_{0}^{T}\left( \|\partial_{t} \bm{u} \|^2_{H^{k+1}(\Omega)} + \|\partial_{t} \xi \|^2_{H^{k}(\Omega)} \right)ds + h^{2l+2}  \int_{0}^{T}\|\partial_{t} p \|^2_{H^{l+1}(\Omega)}ds. \label{thm411end}
\end{align}
Finally, we come to the conclusion that the desired result \eqref{thmcon} holds after applying \eqref{infcon}, \eqref{infcon2}, and \eqref{thm411end}. This completes the proof.
\end{proof}


\begin{theorem}
\label{Thm2}
Let $(\bm{u},\xi,p)$ and $(\bm{u}^{n+1}_h,\xi^{n+1}_h,p^{n+1}_h)$ be the solutions of Eqs. \eqref{c1}-\eqref{c3} and Eqs. \eqref{d1}-\eqref{d3}, respectively. Under the assumptions of \text{Theorem \ref{THM}}, there holds
\begin{align}
    & \frac{1}{\Delta t}\sum_{n=0}^N \left( \| \varepsilon (D_{\bm{u}}^{n+1}) \|_{L^2(\Omega)}^2 + \| D_{\xi}^{n+1} \|_{L^2(\Omega)}^2 + \| D_p^{n+1} \|_{L^2(\Omega)}^2 \right) + \| \nabla e_p^{h,N+1} \|_{L^2(\Omega)}^2
    \nonumber \\
    & \lesssim (\Delta t)^2 \int_{0}^{T} \left( \|\partial_{tt}\bm{u}\|^2_{H^1(\Omega)} + \|\partial_{tt}\xi \|^2_{L^2(\Omega)} + \|\partial_{tt}p \|^2_{L^2(\Omega)} \right) ds
    \nonumber \\
    & \quad + h^{2k} \int_{0}^{T}\left( \|\partial_{t} \bm{u} \|^2_{H^{k+1}(\Omega)} 
    + \|\partial_{t} \xi \|^2_{H^{k}(\Omega)} \right)ds
    + h^{2l+2} \int_{0}^{T}\|\partial_{t} p \|^2_{H^{l+1}(\Omega)}ds. 
    \label{THM2}
\end{align}
\end{theorem}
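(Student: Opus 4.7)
The plan is to adapt the energy argument used for Theorem \ref{THM}, but now work with the time-increment quantities $D_{\bm u}^{n+1},D_\xi^{n+1},D_p^{n+1}$ in the three discrete equations, divide by $\Delta t$, and sum. This way the diffusion term $d(e_p^{h,n+1},D_p^{n+1})$ telescopes through Proposition \ref{inequality} and produces $\|\nabla e_p^{h,N+1}\|_{L^2(\Omega)}^2$, while the $a_1,a_2,a_3$ and $c$ terms supply $\frac{1}{\Delta t}\sum_n(\|\varepsilon(D_{\bm u}^{n+1})\|^2+\|D_\xi^{n+1}\|^2+\|D_p^{n+1}\|^2)$.

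The first step is to take the time difference of the error equation \eqref{prop2eq1} at steps $n+1$ and $n$; since $e_{\bm u}^{h,0}=0$, $e_\xi^{h,0}=0$, this yields for all $n\ge 0$
\begin{equation*}
a_1(D_{\bm u}^{n+1},\bm v_h)-b(\bm v_h,D_\xi^{n+1})=0.
\end{equation*}
I will combine this with \eqref{wintercon} and \eqref{prop2eq5}, which are already written in $D$-form, and test with $\bm v_h=D_{\bm u}^{n+1}$, $\phi_h=D_\xi^{n+1}$, $\psi_h=D_p^{n+1}$. Adding the three identities and using the same algebraic identity employed in \eqref{Thmid} collapses the cross-term $-2c(D_p^{n+1},D_\xi^{n+1})$ with $a_2(D_\xi^{n+1},D_\xi^{n+1})$ and $a_3(D_p^{n+1},D_p^{n+1})$ into $\frac{1}{\lambda}\|\alpha D_p^{n+1}-D_\xi^{n+1}\|_{L^2(\Omega)}^2+c_0\|D_p^{n+1}\|_{L^2(\Omega)}^2$. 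Dividing by $\Delta t$ and summing over $n=0,\dots,N$, the remaining coupling term $\sum_n d(e_p^{h,n+1},D_p^{n+1})$ is handled by \eqref{bineq} and telescopes to $\tfrac{1}{2}K\|\nabla e_p^{h,N+1}\|_{L^2(\Omega)}^2$ since $e_p^{h,0}=0$.

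For the right-hand side, the truncation quantities $\bm u^{n+1}-\bm u^n-\Delta t\partial_t\bm u^{n+1}$, $R_\xi\xi^{n+1}-R_\xi\xi^n-\Delta t\partial_t\xi^{n+1}$, $R_p p^{n+1}-R_p p^n-\Delta t\partial_t p^{n+1}$ appear paired with $D_\xi^{n+1}$ or $D_p^{n+1}$ and are divided by $\Delta t$. Applying Cauchy--Schwarz, Young's inequality with a small parameter $\epsilon$, and the Poincar\'e inequality (to control $D_p^{n+1}$ in $L^2$ by $\|\nabla D_p^{n+1}\|$ when necessary, though here the $L^2$-bound from the LHS suffices), the scaling works out exactly as in Theorem \ref{THM}: the Taylor estimate \eqref{taylor} with $k=1$ gives $\|\text{trunc}\|^2\lesssim(\Delta t)^3\int_{t_n}^{t_{n+1}}\|\partial_{tt}\cdot\|^2ds$, so after the extra division by $\Delta t^2$ brought in by Young's inequality, summation over $n$ produces the desired $(\Delta t)^2\int_0^T\|\partial_{tt}\cdot\|^2ds$ factor. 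The projection-error contributions similarly produce $h^{2k}\int_0^T\|\partial_t\bm u\|_{H^{k+1}}^2+\|\partial_t\xi\|_{H^k}^2\,ds$ and $h^{2l+2}\int_0^T\|\partial_t p\|_{H^{l+1}}^2\,ds$ after using \eqref{po1} and \eqref{po3} on $R_p p^{n+1}-R_p p^n-\Delta t\partial_t p^{n+1}=(p^{n+1}-p^n-\Delta t\partial_t p^{n+1})+(R_p-I)(p^{n+1}-p^n)-\Delta t(R_p-I)\partial_t p^{n+1}$.

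The last step is an absorption argument. The $\epsilon$ terms on the right are $\frac{\epsilon}{\Delta t}\sum_n\|D_\xi^{n+1}\|^2$ and $\frac{\epsilon}{\Delta t}\sum_n\|D_p^{n+1}\|^2$; the first is absorbed by $\frac{2\mu}{\Delta t}\sum_n\|\varepsilon(D_{\bm u}^{n+1})\|^2$ via the discrete inf-sup condition \eqref{dinfsupcon} applied to the $D$-form of \eqref{prop2eq1} (cf.\ \eqref{infcon}), and the second is absorbed by the sum of $\frac{1}{\lambda}\|\alpha D_p^{n+1}-D_\xi^{n+1}\|^2$ and $\|\varepsilon(D_{\bm u}^{n+1})\|^2$ (cf.\ \eqref{infcon2}) — this inf-sup absorption is the most delicate point because it is what makes the bound independent of $c_0$ and works uniformly for large $\lambda$. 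After fixing $\epsilon$ appropriately, choosing the Young constant for the $d$-term to be $K/2$, and invoking the vanishing initial errors, the conclusion \eqref{THM2} follows directly without needing Gr\"onwall, since no $\sum_n\|D_\ast^{n+1}\|^2$ appears on the right after the absorption.
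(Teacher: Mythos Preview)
Your proposal is correct and follows essentially the same route as the paper: take the time difference of \eqref{prop2eq1}, test \eqref{prop2eq3}, \eqref{wintercon}, \eqref{prop2eq5} with $D_{\bm u}^{n+1},D_\xi^{n+1},D_p^{n+1}$, collapse the cross terms via the identity analogous to \eqref{Thmid}, telescope $\sum_n d(e_p^{h,n+1},D_p^{n+1})$ through \eqref{bineq}, bound the five residual terms with Cauchy--Schwarz/Young/Taylor/\eqref{po1}--\eqref{po3}, and absorb the $\|D_\xi\|^2,\|D_p\|^2$ contributions using the inf-sup relations \eqref{infkorn1}--\eqref{infkorn2} without invoking Gr\"onwall. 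One small caveat: your parenthetical remark that the absorption ``works uniformly for large $\lambda$'' is an overclaim---since the left-hand side only carries $\tfrac{1}{\lambda}\|\alpha D_p^{n+1}-D_\xi^{n+1}\|^2$, absorbing $\epsilon\|D_p^{n+1}\|^2$ forces $\epsilon\lesssim 1/\lambda$ and the hidden constant in \eqref{THM2} does depend on $\lambda$ (the paper makes the same choice and does not claim $\lambda$-robustness here).
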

\begin{proof}
Taking the difference of the $(n+1)$-th, $n$-th steps of \eqref{prop2eq1} yields
\begin{align}
    & a_1(D_{\bm{u}}^{n+1},\bm{v}_h) - b(\bm{v}_h,D_{\xi}^{n+1}) = 0, \label{prop2eq3}
\end{align}
After using the definitions of $a_2(\cdot,\cdot)$, $a_3(\cdot,\cdot)$, and $c(\cdot,\cdot)$, one has
\begin{align}
    & a_2( D_{\xi}^{n+1},D_{\xi}^{n+1}) + a_3( D_p^{n+1} ,D_p^{n+1}) - 2c( D_p^{n+1},D_{\xi}^{n+1})
    \nonumber \\
    & = \frac{1}{\lambda} \| \alpha D_p^{n+1} - D_{\xi}^{n+1} \|_{L^2(\Omega)}^2 + c_0 \| D_p^{h,n+1} \|_{L^2(\Omega)}^2.
    \label{Thmid3}
\end{align}
Choosing $\bm{v}_h = D_{\bm{u}}^{n+1}$ in \eqref{prop2eq3}, $\phi_h = D_{\xi}^{n+1}$ in \eqref{wintercon}, $\psi_h = D_{p}^{n+1}$ in  \eqref{prop2eq5}, applying the identity \eqref{Thmid3}, and summing over the index $n$ from $0$ to $N$, we get
\begin{align}
    & \sum_{n=0}^N \left( 2\mu \| \varepsilon (D_{\bm{u}}^{n+1}) \|_{L^2(\Omega)}^2 + c_0 \| D_p^{n+1} \|_{L^2(\Omega)}^2 + \frac{1}{\lambda} \| \alpha D_p^{n+1} - D_{\xi}^{n+1} \|_{L^2(\Omega)}^2 \right) \nonumber \\
    & \quad + \Delta t \sum_{n=0}^N d(e_p^{h,n+1},D_{p}^{n+1}) = \sum_{i=1}^5 T_i, \label{prop2eq6}
\end{align}
where
\begin{align*}
    & T_1 = \sum_{n=0}^N b( {\bm{u}}^{n+1}-  {\bm{u}}^{n} - \Delta t \partial_t {\bm{u}}^{n+1} ,D_{\xi}^{n+1}), \\
    & T_2 = \sum_{n=0}^N a_2(R_{\xi} {\xi}^{n+1}-R_{\xi} {\xi}^{n}- \Delta t \partial_t {\xi}^{n+1},D_{\xi}^{n+1}), \\
    & T_3 = \sum_{n=0}^N c(\Delta t \partial_t {p}^{n+1} - R_p p^{n+1} + R_p p^{n}, D_{\xi}^{n+1}), \\
    & T_4 = \sum_{n=0}^N a_3(R_p {p}^{n+1} - R_p{p}^n - \Delta t \partial_t {p}^{n+1} ,D_{p}^{n+1}), \\
    & T_5 = \sum_{n=0}^N c(  D_{p}^{n+1}, \Delta t \partial_t {\xi}^{n+1} - R_{\xi} {\xi}^{n+1} + R_{\xi}{\xi}^n). 
\end{align*}

Next, we bound the terms $T_i$ for $i=1, 2, \cdots, 5$. Applying the Cauchy-Schwarz inequality, the Young's inequality,  \eqref{po1}, \eqref{po3} and \eqref{taylor}, we have the following estimates with an $\epsilon_1 > 0$ and an $\epsilon_2 > 0$.
\begin{align*}
    T_1 & \leq \frac{\epsilon_1}{3} \sum_{n=0}^N \|D_{\xi}^{n+1}\|_{L^2(\Omega)}^2 + \frac{C}{\epsilon_1}(\Delta t)^3 \int_{0}^{T} \|\partial_{tt}\bm{u}\|^2_{H^1(\Omega)}ds, \\
    T_2 & \leq \frac{\epsilon_1}{3} \sum_{n=0}^N \|D_{\xi}^{n+1}\|_{L^2(\Omega)}^2 +\frac{C}{\epsilon_1} \bigg[ (\Delta t)^3 \int_{0}^{T} \|\partial_{tt}\xi \|^2_{L^2(\Omega)}ds 
    \\ & \quad \quad \quad \quad \quad \quad \quad \quad \quad \quad \quad \ +  h^{2k} \Delta t \int_{0}^{T} \left( \|\partial_{t} \bm{u} \|^2_{H^{k+1}(\Omega)} + \|\partial_{t} \xi \|^2_{H^{k}(\Omega)} \right) ds \bigg], 
    \\
    T_3 & \leq
    \frac{\epsilon_1}{3} \sum_{n=0}^N \|D_{\xi}^{n+1}\|_{L^2(\Omega)}^2 +\frac{C}{\epsilon_1}\bigg[ (\Delta t)^3 \int_{0}^{T} \|\partial_{tt}p \|^2_{L^2(\Omega)}ds + h^{2l+2} \Delta t \int_{0}^{T} \|\partial_{t} p \|^2_{H^{l+1}(\Omega)}ds \bigg],  
    \\
    T_4 & \leq \frac{\epsilon_2}{2} \sum_{n=0}^N \|D_{p}^{n+1}\|_{L^2(\Omega)}^2 + \frac{C}{\epsilon_2} \bigg[ (\Delta t)^3 \int_{0}^{T} \|\partial_{tt}p \|^2_{L^2(\Omega)}ds +  h^{2l+2} \Delta t \int_{0}^{T} \|\partial_{t} p \|^2_{H^{l+1}(\Omega)}ds \bigg], 
    \\
    T_5 & \leq \frac{\epsilon_2}{2} \sum_{n=0}^N \|D_{p}^{n+1}\|_{L^2(\Omega)}^2 + \frac{C}{\epsilon_2} \bigg[ (\Delta t)^3 \int_{0}^{T} \|\partial_{tt}\xi \|^2_{L^2(\Omega)}ds 
    \\ & \quad \quad \quad \quad \quad \quad \quad \quad \quad \quad \quad \ + h^{2k}\Delta t \int_{0}^{T} \left( \|\partial_{t} \bm{u} \|^2_{H^{k+1}(\Omega)} + \|\partial_{t} \xi \|^2_{H^{k}(\Omega)} \right) ds \bigg] .
\end{align*}
Similarly, using the inf-sup condition \eqref{dinfsupcon}, \eqref{prop2eq3}, and the Korn's inequality \eqref{korncon} yields
\begin{align}
    \|D_{\xi}^{n+1}\|_{L^2(\Omega)}^2 \lesssim \sup_{\bm{v}_h \in \bm{V}_h} \frac{  b(\bm{v}_h,D_{\xi}^{n+1}) }{\| \bm{v}_h \|_{H^1(\Omega)}} = \sup_{\bm{v}_h \in \bm{V}_h} \frac{  a_1(D_{\bm{u}}^{n+1},\bm{v}_h) }{\| \bm{v}_h \|_{H^1(\Omega)}} \lesssim \|\varepsilon(D_{\bm{u}}^{n+1})\|_{L^2(\Omega)}^2, \label{infkorn1}
\end{align}
which directly implies 
\begin{align}
    \|D_p^{n+1}\|_{L^2(\Omega)}^2 \lesssim \|\alpha D_p^{n+1} - D_{\xi}^{n+1}\|_{L^2(\Omega)}^2 + \|\varepsilon(D_{\bm{u}}^{n+1})\|_{L^2(\Omega)}^2. \label{infkorn2}
\end{align}
Using \eqref{bineq} and the fact $e_{p}^{h,0} = 0$, we obtain 
\begin{align}
    \Delta t \sum_{n=0}^N d(e_p^{h,n+1},D_{p}^{n+1}) & \geq \frac{\Delta t}{2} \sum_{n=0}^N \left[ d(e_p^{h,n+1},e_p^{h,n+1}) - d(e_p^{h,n},e_p^{h,n}) \right]
    \nonumber \\
    & \geq \frac{\Delta t}{2} d(e_p^{h,N+1},e_{p}^{h,N+1}). \label{thm422}
\end{align}
Based on \eqref{prop2eq6}, ignoring the term $c_0 \sum_{n=0}^N \| D_p^{n+1} \|_{L^2(\Omega)}^2$, using \eqref{infkorn1} and \eqref{infkorn2} to choose large enough positive $\epsilon_1$ and $\epsilon_2$ such that $ \epsilon_1 \| D_{\xi}^{n+1} \|_{L^2(\Omega)}^2$ $\leq \frac{\mu}{2} \| \varepsilon( D_{\bm{u}}^{n+1} ) \|_{L^2(\Omega)}^2$ and $ \epsilon_2 \| D_{p}^{n+1} \|_{L^2(\Omega)}^2 \leq \frac{1}{2\lambda} \|\alpha D_p^{n+1} - D_{\xi}^{n+1}\|_{L^2(\Omega)}^2 + \frac{\mu}{2} \| \varepsilon( D_{\bm{u}}^{n+1} ) \|_{L^2(\Omega)}^2 $, respectively, we can apply \eqref{thm422} and bounds of $T_i$ for $i=1, 2, \cdots, 5$ to obtain
\begin{align*}
    & \frac{1}{\Delta t}\sum_{n=0}^N \left( \mu \| \varepsilon (D_{\bm{u}}^{n+1}) \|_{L^2(\Omega)}^2 + \frac{1}{2\lambda} \| \alpha D_p^{n+1} - D_{\xi}^{n+1} \|_{L^2(\Omega)}^2 \right) +  \frac{K}{2} \|e_p^{h,N+1}\|_{L^2(\Omega)}^2
    \nonumber \\
    & \leq \left( \frac{C}{\epsilon_1} + \frac{C}{\epsilon_2} \right) \bigg[ (\Delta t)^2 \int_{0}^{T} \left( \|\partial_{tt}\bm{u}\|^2_{H^1(\Omega)} + \|\partial_{tt}\xi \|^2_{L^2(\Omega)} + \|\partial_{tt}p \|^2_{L^2(\Omega)} \right) ds
    \nonumber \\
    & \ \ \ + h^{2k} \int_{0}^{T}\left( \|\partial_{t} \bm{u} \|^2_{H^{k+1}(\Omega)} 
    + \|\partial_{t} \xi \|^2_{H^{k}(\Omega)} \right)ds
    + h^{2l+2} \int_{0}^{T}\|\partial_{t} p \|^2_{H^{l+1}(\Omega)}ds \bigg].
\end{align*}
By using \eqref{infkorn1}, \eqref{infkorn2}, and the above estimate, the proof is completed.
\end{proof}

\begin{theorem}
\label{THM33}
Let $(\bm{u},\xi,p)$ and $(\bm{u}^{n+1}_h,\xi^{n+1}_h,p^{n+1}_h)$ be the solutions of Eqs. \eqref{c1}-\eqref{c3} and Eqs. \eqref{d1}-\eqref{d3}, respectively. Under the assumptions of \text{Theorem \ref{THM}}, there holds
\begin{align}
    \| \varepsilon ( e_{\bm{u}}^{N+1})\|_{L^2(\Omega)}   
    + \|e_{\xi}^{N+1}\|_{L^2(\Omega)}
    + \|e_{p}^{N+1}\|_{L^2(\Omega)}
    & \lesssim \Delta t + h^{k} + h^{l+1},  \\
     \| \nabla e_p^{N+1} \|_{L^2(\Omega)} & \lesssim \Delta t + h^{k} + h^{l}.
\end{align}
\end{theorem}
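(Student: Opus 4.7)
The plan is to treat this theorem as a corollary of Theorems \ref{THM} and \ref{Thm2} combined with the projection estimates \eqref{po1}--\eqref{po3}. Start from the splitting already introduced in Section \ref{Sec4},
\begin{equation*}
e_{\bm u}^{N+1} = e_{\bm u}^{I,N+1} + e_{\bm u}^{h,N+1}, \qquad e_{\xi}^{N+1} = e_{\xi}^{I,N+1} + e_{\xi}^{h,N+1}, \qquad e_p^{N+1} = e_p^{I,N+1} + e_p^{h,N+1},
\end{equation*}
and apply the triangle inequality to each of the four norms on the left-hand side. The proof then reduces to bounding, on the one hand, the projection components $e_{\bm u}^{I,N+1}, e_\xi^{I,N+1}, e_p^{I,N+1}$, and on the other hand, the discrete components $e_{\bm u}^{h,N+1}, e_\xi^{h,N+1}, e_p^{h,N+1}$.

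For the projection components I would simply invoke the three estimates \eqref{po1}, \eqref{po2}, \eqref{po3}, evaluated at $t=t_{N+1}$, using the $L^\infty_t$ regularity of $\bm u$, $\xi$, $p$ assumed in Theorem \ref{THM}. This yields
\begin{equation*}
\|\varepsilon(e_{\bm u}^{I,N+1})\|_{L^2(\Omega)} + \|e_\xi^{I,N+1}\|_{L^2(\Omega)} \lesssim h^k, \qquad \|e_p^{I,N+1}\|_{L^2(\Omega)} \lesssim h^{l+1}, \qquad \|\nabla e_p^{I,N+1}\|_{L^2(\Omega)} \lesssim h^l.
\end{equation*}
Note that it is precisely the $\nabla$-estimate \eqref{po2} of the elliptic projection that produces the $h^l$ term (rather than $h^{l+1}$) appearing in the second bound of the theorem, which matches exactly the claimed order.

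For the discrete components I would observe that the regularity hypotheses in the statement of Theorem \ref{THM} are identical to those in Theorem \ref{THM33}, hence all of the time integrals
\begin{equation*}
\int_0^T \!\bigl( \|\partial_{tt}\bm u\|_{H^1}^2 + \|\partial_{tt}\xi\|_{L^2}^2 + \|\partial_{tt}p\|_{L^2}^2 \bigr)\,ds, \ \ \int_0^T \!\bigl( \|\partial_t \bm u\|_{H^{k+1}}^2 + \|\partial_t \xi\|_{H^k}^2 \bigr)\,ds, \ \ \int_0^T \!\|\partial_t p\|_{H^{l+1}}^2\,ds
\end{equation*}
are finite constants independent of $\Delta t$ and $h$. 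Substituting this into the conclusion \eqref{thmcon} of Theorem \ref{THM} gives
\begin{equation*}
\|\varepsilon(e_{\bm u}^{h,N+1})\|_{L^2(\Omega)} + \|e_\xi^{h,N+1}\|_{L^2(\Omega)} + \|e_p^{h,N+1}\|_{L^2(\Omega)} \lesssim \Delta t + h^k + h^{l+1},
\end{equation*}
and substituting into \eqref{THM2} of Theorem \ref{Thm2} (keeping only the $\|\nabla e_p^{h,N+1}\|_{L^2}^2$ term on the left) gives $\|\nabla e_p^{h,N+1}\|_{L^2(\Omega)} \lesssim \Delta t + h^k + h^{l+1}$.

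Combining the projection and discrete bounds through the triangle inequality then produces the two asserted inequalities. There is no real obstacle here: the only subtle point is to be careful with the orders, keeping the $h^l$ contribution from $\|\nabla e_p^{I,N+1}\|_{L^2}$ in the gradient estimate (which dominates the $h^{l+1}$ from the discrete part and hence appears in the final bound) while dropping the higher-order $h^{l+1}$ terms where they are absorbed. Thus the proof is essentially a clean assembly of the two previous theorems with the standard projection estimates.
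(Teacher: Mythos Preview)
Your proposal is correct and follows exactly the same approach as the paper: split each error via the triangle inequality into the projection part and the discrete part, bound the projection parts by \eqref{po1}--\eqref{po3}, and bound the discrete parts by Theorems~\ref{THM} and~\ref{Thm2}. The paper's proof is in fact just the one-sentence version of what you wrote.
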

\begin{proof}
We start with \eqref{po1}, \eqref{po2}, and \eqref{po3}. Applying the triangle inequality, Theorem \ref{THM}, and Theorem \ref{Thm2}, 
we see that the above error estimates readily follow.
\end{proof}


\subsection{A priori error estimates for Method 2.}

\begin{theorem}
\label{THM3}
Let $(\bm{u},\xi,p)$ and $(\bm{u}^{n+1}_h,\xi^{n+1}_h,p^{n+1}_h)$ be the solutions of Eqs. \eqref{c1}-\eqref{c3} and Eqs. \eqref{dd1}-\eqref{dd3}, respectively. Assume that $\bm{u} \in L^{\infty}(0,T;\bm{H}^{k+1}_{0,\Gamma_d} (\Omega))$, $\partial_{t}\bm{u} \in
$ $
L^{2}(0,T;\bm{H}^{k+1}_{0,\Gamma_d} (\Omega))$, $\partial_{tt}\bm{u} \in L^{2}(0,T;\bm{H}_{0,\Gamma_d}^{1}(\Omega))$, $\partial_{ttt}\bm{u} \in L^{2}(0,T;\bm{H}_{0,\Gamma_d}^{1}(\Omega))$, $\xi \in L^{\infty}(0,T;
$ $
H^{k}(\Omega))$, $\partial_t \xi \in L^{2}(0,T;H^{k}(\Omega))$, $\partial_{tt} \xi \in L^{2}(0,T;L^{2}(\Omega))$, $\partial_{ttt} \xi \in L^{2}(0,T;L^{2}(\Omega))$, $p \in L^{\infty}(0,T;H^{l+1}_{0,\Gamma_p}(\Omega))$, $\partial_{t} p \in L^{2}(0,T;H^{l+1}_{0,\Gamma_p} (\Omega))$, $\partial_{tt} p \in L^{2}(0,T;L^{2}(\Omega))$, $\partial_{ttt} p \in L^{2}(0,T;L^{2}(\Omega))$. There holds
\begin{align}
    & \| \varepsilon ( e_{\bm{u}}^{h,N+1})\|_{L^2(\Omega)}^2 + \|e_{\xi}^{h,N+1}\|_{L^2(\Omega)}^2  
    + \| e_{p}^{h,N+1} \|_{L^2(\Omega)}^2 
    + \Delta t \sum_{n=0}^N \| \nabla ( e_p^{h,n+1} + e_p^{h,n} ) \|_{L^2(\Omega)}^2
    \nonumber \\
    & \lesssim 
    (\Delta t)^4 \int_{0}^{T} \left( \|\partial_{ttt}\bm{u}\|^2_{H^1(\Omega)} + \|\partial_{ttt}\xi \|^2_{L^2(\Omega)} +  \|\partial_{ttt}p \|^2_{L^2(\Omega)} \right) ds
    \nonumber \\
    & \quad + h^{2k} \int_{0}^{T}\left( \|\partial_{t} \bm{u} \|^2_{H^{k+1}(\Omega)} + \|\partial_{t} \xi \|^2_{H^{k}(\Omega)} \right)ds + h^{2l+2}  \int_{0}^{T}\|\partial_{t} p \|^2_{H^{l+1}(\Omega)}ds.
    \label{thm44dr}
\end{align}
\end{theorem}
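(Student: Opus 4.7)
The plan is to parallel the three-step structure used in the proof of Theorem \ref{THM}, replacing its backward-Euler consistency analysis by a midpoint (Crank--Nicolson) consistency analysis. The first step is to derive the error equations. Since \eqref{dd1}--\eqref{dd2} are identical in form to \eqref{d1}--\eqref{d2}, the error equations analogous to \eqref{prop2eq1}--\eqref{prop2eq2} remain valid unchanged. For \eqref{dd3}, I would evaluate the continuous equation \eqref{c3} at both $t_{n+1}$ and $t_n$, take the average, and subtract \eqref{dd3}; this naturally matches the Crank--Nicolson averaging $\tfrac{p_h^{n+1}+p_h^n}{2}$ on the diffusion term and the trapezoidal averaging on the right-hand side. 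After inserting projection operators and using $d(R_p p - p, \psi_h) = 0$ from \eqref{proj3}, the resulting pressure error identity reads
\begin{equation*}
a_3(D_p^{n+1}, \psi_h) - c\!\left(\psi_h, D_\xi^{n+1}\right) + \Delta t\, d\!\left(\tfrac{e_p^{h,n+1}+e_p^{h,n}}{2}, \psi_h\right) = \mathcal{R}_p(\psi_h),
\end{equation*}
where $\mathcal{R}_p$ collects trapezoidal-rule residuals of the form $\tfrac{R_p p^{n+1}-R_p p^n}{\Delta t} - \tfrac{\partial_t p^{n+1}+\partial_t p^n}{2}$ and its $\xi$-analog.

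Second, I would derive the midpoint-consistent analog of \eqref{wintercon} by differentiating \eqref{c2} in time, evaluating at $t_{n+1/2}$ and multiplying by $\Delta t$, then combining with the difference of \eqref{c2} at $t_{n+1}$ and $t_n$ as in \eqref{refo}--\eqref{refo1}. The shape of the identity is the same as \eqref{wintercon}, but with the backward-Euler residuals $\bm{u}^{n+1}-\bm{u}^n - \Delta t\,\partial_t \bm{u}^{n+1}$ replaced by the midpoint residuals $\bm{u}^{n+1}-\bm{u}^n - \Delta t\,\partial_t \bm{u}^{n+1/2}$ (and analogously for $\xi$ and $p$). Applying the Taylor remainder bound \eqref{taylor} with $k=2$, the squared $H^1$-norm of this residual is controlled by $(\Delta t)^5 \int_{t_n}^{t_{n+1}} \|\partial_{ttt}\bm{u}\|^2_{H^1}\,ds$, which after a $\Delta t$-weighted Young's inequality and summation in $n$ yields precisely the target $(\Delta t)^4 \int_0^T \|\partial_{ttt}\bm{u}\|^2_{H^1}\,ds$ contribution.

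Third, I would test the first error equation with $\bm{v}_h = D_{\bm{u}}^{n+1}$, the midpoint-consistent analog of \eqref{wintercon} with $\phi_h = e_\xi^{h,n+1}$, and the new pressure error equation with $\psi_h = \tfrac{e_p^{h,n+1}+e_p^{h,n}}{2}$. This last choice is the key innovation relative to Theorem \ref{THM}: it converts the diffusion term into $\Delta t\, d(\tfrac{e_p^{h,n+1}+e_p^{h,n}}{2},\tfrac{e_p^{h,n+1}+e_p^{h,n}}{2})$, which produces exactly the desired $\Delta t\,\|\nabla(e_p^{h,n+1}+e_p^{h,n})\|_{L^2}^2$ quantity on the left-hand side. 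Summing over $n$, using identities \eqref{beq}--\eqref{bineq} to telescope the coupled $a_1$, $a_2$, $a_3$, $c$ contributions as in \eqref{LHS2}, bounding the $E_i$-type residuals by Cauchy--Schwarz and $\Delta t$-weighted Young's inequality, invoking \eqref{infcon}--\eqref{infcon2} to absorb $\|e_\xi^{h,n+1}\|_{L^2}$ and $\|e_p^{h,n+1}\|_{L^2}$ into $\|\varepsilon(e_{\bm{u}}^{h,n+1})\|_{L^2}$ and $\|\alpha e_p^{h,n+1} - e_\xi^{h,n+1}\|_{L^2}$, and finally applying the discrete Gr\"onwall inequality closes the estimate.

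The main obstacle is the reconciliation of time levels: the elastic and constraint equations \eqref{dd1}--\eqref{dd2} are enforced at $t_{n+1}$, whereas the Crank--Nicolson pressure equation is naturally second-order consistent at $t_{n+1/2}$. Obtaining second-order consistency in the combined identity coupling $b(D_{\bm{u}}^{n+1},\cdot)$, $a_2(D_\xi^{n+1},\cdot)$, and $c(D_p^{n+1},\cdot)$ requires comparing against the midpoint derivative $\partial_t(\cdot)^{n+1/2}$ rather than the endpoint derivative $\partial_t(\cdot)^{n+1}$ used in Method 1; any residual mismatch between the midpoint consistency in the $d$-term and the consistency in these coupling residuals would downgrade the global error to first order in $\Delta t$ and would require absorbing a $\partial_{tt}$-type contribution that the theorem statement does not allow.
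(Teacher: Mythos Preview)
Your plan correctly identifies the structural adaptations needed for the Crank--Nicolson analysis (trapezoidal averaging of \eqref{c3}, midpoint residuals with $(\Delta t)^4$ scaling via \eqref{taylor}), but your choice of test functions creates a mismatch that breaks the telescoping structure. You propose testing the analog of \eqref{wintercon} with $\phi_h = e_\xi^{h,n+1}$ while testing the pressure error equation with $\psi_h = \tfrac{1}{2}(e_p^{h,n+1}+e_p^{h,n})$. The resulting cross-coupling terms $-c(D_p^{n+1}, e_\xi^{h,n+1})$ and $-c(\tfrac{1}{2}(e_p^{h,n+1}+e_p^{h,n}), D_\xi^{n+1})$ then fail to combine with the $a_2$ and $a_3$ terms into the quadratic form $\tfrac{1}{\lambda}\|\alpha e_p^{h} - e_\xi^{h}\|^2$ that underlies identity \eqref{Thmid}; a direct computation leaves the residue $-\tfrac{\alpha}{2\lambda}\int_\Omega D_p^{n+1} D_\xi^{n+1}$, which is of indefinite sign and cannot be summed in $n$ without an independent bound on $\sum_n(\|D_p^{n+1}\|^2 + \|D_\xi^{n+1}\|^2)$, defeating the purpose of this first energy estimate.

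The paper avoids this by keeping \emph{all} test functions at the averaged time level. It first sums the $(n{+}1)$-th and $n$-th steps of \eqref{prop2eq1} to obtain $a_1(e_{\bm u}^{h,n+1}+e_{\bm u}^{h,n}, \bm v_h) - b(\bm v_h, e_\xi^{h,n+1}+e_\xi^{h,n}) = 0$, then tests this with $\bm v_h = D_{\bm u}^{n+1}$, tests the analog of \eqref{wintercon} with $\phi_h = e_\xi^{h,n+1}+e_\xi^{h,n}$, and tests the pressure error equation with $\psi_h = e_p^{h,n+1}+e_p^{h,n}$. Every bilinear contribution is then of the form $B(u+v,u-v)=B(u,u)-B(v,v)$ from \eqref{beq2}, producing an \emph{exact} telescoping identity (no inequality \eqref{bineq} needed for the left-hand side). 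Your ``main obstacle'' paragraph correctly senses a time-level reconciliation issue but misplaces it: the delicate point is not the midpoint-versus-endpoint consistency of the Taylor residuals, but the requirement that the $\xi$ and $p$ test functions sit at matching levels so that the combined norm $\tfrac{1}{\lambda}\|\alpha e_p^{h} - e_\xi^{h}\|^2$ telescopes cleanly.
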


\begin{proof}
Firstly, we note that \eqref{prop2eq1} and \eqref{prop2eq2} still hold here. Summing up the $(n+1)$-th, $n$-th steps of \eqref{prop2eq1}, and following a similar argument of \eqref{wintercon}, we get
\begin{align}
    & a_1(e_{\bm{u}}^{h,n+1} + e_{\bm{u}}^{h,n},\bm{v}_h) - b(\bm{v}_h,e_{\xi}^{h,n+1}+e_{\xi}^{h,n})  = 0, \label{prop2eq311} 
     \\
    & b(D_{\bm{u}}^{n+1} ,\phi_h)+a_2(D_{\xi}^{n+1},\phi_h) - c( D_p^{n+1},\phi_h) 
    \nonumber \\
    & = b\left( {\bm{u}}^{n+1}-  {\bm{u}}^{n} - \frac{ \Delta t \partial_t {\bm{u}}^{n+1} + \Delta t \partial_t {\bm{u}}^{n} }{2} ,\phi_h\right) 
    \nonumber \\ 
    & \quad  + a_2 \left(R_{\xi} {\xi}^{n+1} - R_{\xi}{\xi}^n -  \frac{ \Delta t \partial_t {\xi}^{n+1} + \Delta t \partial_t {\xi}^{n} }{2},\phi_h \right) 
    \nonumber \\
    & \quad - c\left( R_p {p}^{n+1} - R_p{p}^n - \frac{ \Delta t \partial_t {p}^{n+1} + \Delta t \partial_t {p}^{n} }{2} , \phi_h \right) . \label{wintercon22}
\end{align}
After summing up the $(n+1)$-th, $n$-th time steps of \eqref{c3}, we multiply $\frac{1}{2}$ to get
\begin{align}
    & a_3 \left( \frac{\partial_t p^{n+1} +  \partial_t p^{n}}{2}, \psi_h \right )-c\left( \psi_h , \frac{\partial_t \xi^{n+1} +  \partial_t \xi^{n}}{2} \right) +  d \left(\frac{p^{n+1} + p^{n}}{2},\psi_h \right) \nonumber \\
    & = \frac{1}{2}( Q_s^{n+1} +  Q_s^{n} ,\psi_h ) + \frac{1}{2} \langle  g_2^{n+1}+g_2^{n} , \psi_h \rangle_{\Gamma_f}. \label{subs1}
\end{align}
Subtracting \eqref{dd3} from \eqref{subs1} yields
\begin{align}
    & a_3\left( \frac{\partial_t p^{n+1} +  \partial_t p^{n}}{2}  - \frac{p^{n+1}_h - p^{n}_h}{\Delta t},\psi_h\right) + d\left(  \frac{e_p^{n+1}+e_p^n}{2},\psi_h \right)
    \nonumber \\ 
    & = c\left( \psi_h, \frac{\partial_t \xi^{n+1} +  \partial_t \xi^{n}}{2}- \frac{\xi^{n+1}_h - \xi^{n}_h}{\Delta t}\right). \label{subs2}
\end{align}
Following the same argument as \eqref{prop2eq5} in Theorem \ref{THM}, we apply the projection operator \eqref{proj3} to \eqref{subs2} and derive
\begin{align}
    & a_3( D_p^{n+1},\psi_h) - c( \psi_h , D_{\xi}^{n+1}) + \frac{\Delta t}{2}  d(  e_p^{h,n+1}+e_p^{h,n},\psi_h )  \nonumber \\
    & = a_3 \left(R_p {p}^{n+1} - R_p{p}^n - \frac{ \Delta t \partial_t {p}^{n+1} + \Delta t \partial_t {p}^{n} }{2} ,\psi_h \right) 
    \nonumber \\ 
    & \quad - c\left(\psi_h,R_{\xi} {\xi}^{n+1} - R_{\xi}{\xi}^n -  \frac{ \Delta t \partial_t {\xi}^{n+1} + \Delta t \partial_t {\xi}^{n} }{2} \right). \label{prop2eq522}
\end{align}
Choosing $\bm{v}_h = D_{\bm{u}}^{n+1}$ in \eqref{prop2eq311}, $\phi_h = e_{\xi}^{h,n+1} + e_{\xi}^{h,n}$ in \eqref{wintercon22}, $\psi_h = e_{p}^{h,n+1} + e_{p}^{h,n}$ in  \eqref{prop2eq522}, and summing over the index $n$ from $0$ to $N$ yield
\begin{align}
    \text{LHS}_2 & \coloneqq \sum_{n=0}^N \bigg[ a_1(e_{\bm{u}}^{h,n+1} + e_{\bm{u}}^{h,n},D_{\bm{u}}^{n+1})+a_2(D_{\xi}^{n+1},e_{\xi}^{h,n+1} + e_{\xi}^{h,n}) 
    \nonumber \\ 
    & \quad - c( D_p^{n+1},e_{\xi}^{h,n+1} + e_{\xi}^{h,n})
    + a_3( D_p^{n+1},e_{p}^{h,n+1} + e_{p}^{h,n})
    - c( e_{p}^{h,n+1} + e_{p}^{h,n} , D_{\xi}^{n+1})
    \nonumber \\ 
    & \quad 
    + \frac{\Delta t}{2} d \left(  e_p^{h,n+1}+e_p^{h,n},e_{p}^{h,n+1} + e_{p}^{h,n} \right) \bigg] = \sum_i^5 J_i,  \label{thm4eq27}
\end{align}
where
\begin{align*}
    & J_1 = \sum_{n=0}^N b\left( {\bm{u}}^{n+1}-  {\bm{u}}^{n} - \frac{ \Delta t \partial_t {\bm{u}}^{n+1} + \Delta t \partial_t {\bm{u}}^{n} }{2} ,e_{\xi}^{h,n+1} + e_{\xi}^{h,n} \right),
    \\ 
    & J_2 = \sum_{n=0}^N a_2 \left(R_{\xi} {\xi}^{n+1} - R_{\xi}{\xi}^n -  \frac{ \Delta t \partial_t {\xi}^{n+1} + \Delta t \partial_t {\xi}^{n} }{2},e_{\xi}^{h,n+1} + e_{\xi}^{h,n} \right), 
    \nonumber \\ 
    & J_3 = \sum_{n=0}^N c\left( \frac{ \Delta t \partial_t {p}^{n+1} + \Delta t \partial_t {p}^{n} }{2} - R_p {p}^{n+1} + R_p{p}^n ,  e_{\xi}^{h,n+1} + e_{\xi}^{h,n} \right), \\ 
    & J_4 = \sum_{n=0}^N a_3 \left(R_p {p}^{n+1} - R_p{p}^n - \frac{ \Delta t \partial_t {p}^{n+1} + \Delta t \partial_t {p}^{n} }{2} , e_{p}^{h,n+1} + e_{p}^{h,n} \right),
    \\
    & J_5 = \sum_{n=0}^N c\left( e_{p}^{h,n+1} + e_{p}^{h,n}, \frac{ \Delta t \partial_t {\xi}^{n+1} + \Delta t \partial_t {\xi}^{n} }{2} - R_{\xi} {\xi}^{n+1} + R_{\xi}{\xi}^n \right). 
\end{align*}
Using the definitions of $a_2(\cdot,\cdot)$, $a_3(\cdot,\cdot)$ and $c(\cdot,\cdot)$, we can simplify $\text{LHS}_2$ by the identity
\begin{align}
    & a_2(D_{\xi}^{n+1},e_{\xi}^{h,n+1} + e_{\xi}^{h,n}) - c( D_p^{n+1},e_{\xi}^{h,n+1} + e_{\xi}^{h,n})
    \nonumber \\
    & \quad + a_3( D_p^{n+1},e_{p}^{h,n+1} + e_{p}^{h,n}) - c( e_{p}^{h,n+1} + e_{p}^{h,n} , D_{\xi}^{n+1})
    \nonumber \\
    & = c_0 \left( \| e_{p}^{h,n+1} \|^2_{L^2(\Omega)} - \| e_{p}^{h,n} \|^2_{L^2(\Omega)} \right) 
    \nonumber \\
    & \quad + \frac{1}{\lambda} \left(\| \alpha e_{p}^{h,n+1} - e_{\xi}^{h,n+1} \|^2_{L^2(\Omega)} - \| \alpha e_{p}^{h,n} - e_{\xi}^{h,n} \|^2_{L^2(\Omega)} \right).
    \label{Thmid2}
\end{align}
Applying \eqref{thm4eq27} and \eqref{Thmid2}, we obtain
\begin{align}
    \text{LHS}_2 & = 2\mu \| \varepsilon ( e_{\bm{u}}^{h,N+1})\|_{L^2(\Omega)}^2 - 2\mu \| \varepsilon ( e_{\bm{u}}^{h,0})\|_{L^2(\Omega)}^2
    + c_0\|e_{p}^{h,N+1}\|_{L^2(\Omega)}^2 
    \nonumber \\ 
    & \quad - c_0\|e_{p}^{h,0}\|_{L^2(\Omega)}^2 + \frac{1}{\lambda} \|\alpha e_{p}^{h,N+1} - e_{\xi}^{h,N+1}\|_{L^2(\Omega)}^2 
    \nonumber \\
    & \quad - \frac{1}{\lambda}\|\alpha e_{p}^{h,0} - e_{\xi}^{h,0}\|_{L^2(\Omega)}^2 
    + \frac{K \Delta t}{2} \sum_{n=0}^N \| \nabla ( e_p^{h,n+1} + e_p^{h,n} ) \|_{L^2(\Omega)}^2. 
    \label{LHS4}
\end{align}
Assuming that $f$ is three times differentiable with respect to $t$ and $f^{'''}$ is continuous in $[0,T]$, the Taylor expansion Theorem implies
\begin{align*}
    f(t_{n+\frac{1}{2}}) & = f(t_n) + \frac{\Delta t}{2} f'(t_n) + \frac{(\Delta t)^2}{8}f''(\eta_1), \\
    f(t_{n+\frac{1}{2}}) & = f(t_{n+1}) - \frac{\Delta t}{2} f'(t_{t+1}) + \frac{(\Delta t)^2}{8}f''(\eta_2),
\end{align*}
where $\eta_1 \in (t_n,t_{n+\frac{1}{2}})$, and $\eta_2 \in (t_{n+\frac{1}{2}},t_{n+1})$. It follows that
\begin{align}
    f(t_{n+1}) - f(t_{n})  - \frac{\Delta t f'(t_{n+1}) + \Delta t f'(t_{n})}{2} = \frac{(\Delta t)^2}{8} \left[ f''(\eta_2) - f''(\eta_1) \right]. \label{thm442}
\end{align}

Next, we bound the terms $J_i$ for $i=1, 2, \cdots, 5$. Applying the Cauchy-Schwarz inequality, the Young's inequality,  \eqref{po1}, \eqref{po3}, \eqref{thm442} and \eqref{taylor}, we can bound $J_1$, $J_2$ and $J_3$ with an $\epsilon_1 > 0$ as follows
\begin{align*}
    J_1  & \leq \frac{\epsilon_1}{3} \Delta t \sum_{n=0}^N  \|e_{\xi}^{h,n+1}\|_{L^2(\Omega)}^2 + \frac{C}{\epsilon_1} (\Delta t)^4 \int_{0}^{T}\|\partial_{ttt}\bm{u}\|^2_{H^1(\Omega)}ds,
    \nonumber \\
    J_2 & \leq 
    \frac{\epsilon_1}{3} \Delta t \sum_{n=0}^N  \|e_{\xi}^{h,n+1}\|_{L^2(\Omega)}^2 + \frac{C}{\epsilon_1} \bigg[ (\Delta t)^4 \int_{0}^{T}\|\partial_{ttt}\xi \|^2_{L^2(\Omega)}ds 
    \\ & \quad \quad \quad \quad \quad \quad \quad \quad \quad \quad \quad \quad \ \ + h^{2k} \int_{0}^{T} \left( \|\partial_{t} \bm{u} \|^2_{H^{k+1}(\Omega)} + \|\partial_{t} \xi \|^2_{H^{k}(\Omega)} \right) ds \bigg],
    \nonumber \\
    J_3 & \leq \frac{\epsilon_1}{3} \Delta t \sum_{n=0}^N  \|e_{\xi}^{h,n+1}\|_{L^2(\Omega)}^2 + \frac{C}{\epsilon_1} \bigg[ (\Delta t)^4 \int_{0}^{T}\|\partial_{ttt}p \|^2_{L^2(\Omega)}ds   + h^{2l+2} \int_{0}^{T} \|\partial_{t} p \|^2_{H^{l+1}(\Omega)} ds \bigg].
\end{align*}
Using the Cauchy-Schwarz inequality, the Young's inequality, the Poincar{\'e} inequality, \eqref{po1}, \eqref{po3}, \eqref{thm442}, and \eqref{taylor},  we can bound $E_4$ and $E_5$ with an $\epsilon_2>0$ as follows.
\begin{align*}
    J_4 & \leq 
    \frac{\epsilon_2}{2} \Delta t \sum_{n=0}^N \|\nabla ( e_p^{h,n+1} + e_p^{h,n} ) \|_{L^2(\Omega)}^2
    \\
    & \quad + \frac{C}{\epsilon_2} \bigg[ (\Delta t)^4 \int_{0}^{T}\|\partial_{ttt}p \|^2_{L^2(\Omega)}ds +  h^{2l+2} \int_{0}^{T}\|\partial_{t} p \|^2_{H^{l+1}(\Omega)}ds \bigg], 
    \\
    J_5 & \leq 
    \frac{\epsilon_2}{2} \Delta t \sum_{n=0}^N \|\nabla ( e_p^{h,n+1} + e_p^{h,n} ) \|_{L^2(\Omega)}^2 
    \\
    & \quad + \frac{C}{\epsilon_2} \bigg[ (\Delta t)^4 \int_{0}^{T}\|\partial_{ttt}\xi \|^2_{L^2(\Omega)}ds + h^{2k} \int_{0}^{T} \left( \|\partial_{t} \bm{u} \|^2_{H^{k+1}(\Omega)} + \|\partial_{t} \xi \|^2_{H^{k}(\Omega)} \right) ds \bigg].
\end{align*}
We note that \eqref{infcon} and \eqref{infcon2} still hold here, then we deal with \eqref{thm4eq27} next. Considering the fact $\varepsilon  (e_{\bm{u}}^{h,0}) = 0$, $e_{p}^{h,0}=0$, $e_{\xi}^{h,0} = 0$, ignoring the term $c_0\|e_{p}^{h,N+1}\|_{L^2(\Omega)}^2$, using \eqref{infcon} to choose large enough positive $\epsilon_1$ such that $\epsilon_1 \|e_{\xi}^{h,k+1}\|_{L^2(\Omega)}^2 \leq 2 \mu \|\varepsilon(e_{\bm{u}}^{h,k+1})\|_{L^2(\Omega)}^2$ and setting $\epsilon_2 =  K/4$, we can apply the discrete Gr{\"o}nwall's inequality to derive
\begin{align}
    & 2\mu \| \varepsilon ( e_{\bm{u}}^{h,N+1})\|_{L^2(\Omega)}^2 
    + \frac{1}{\lambda} \|\alpha e_{p}^{h,N+1} - e_{\xi}^{h,N+1}\|_{L^2(\Omega)}^2 
    + \frac{K \Delta t}{4} \sum_{n=0}^N \| \nabla ( e_p^{h,n+1} + e_p^{h,n} ) \|_{L^2(\Omega)}^2
    \nonumber \\
    & \lesssim 
    (\Delta t)^4 \int_{0}^{T} \left( \|\partial_{ttt}\bm{u}\|^2_{H^1(\Omega)} + \|\partial_{ttt}\xi \|^2_{L^2(\Omega)} +  \|\partial_{ttt}p \|^2_{L^2(\Omega)} \right) ds
    \nonumber \\
    & \ \ \ + h^{2k} \int_{0}^{T}\left( \|\partial_{t} \bm{u} \|^2_{H^{k+1}(\Omega)} + \|\partial_{t} \xi \|^2_{H^{k}(\Omega)} \right)ds + h^{2l+2}  \int_{0}^{T}\|\partial_{t} p \|^2_{H^{l+1}(\Omega)}ds. \label{thm44eqf}
\end{align}
Finally, applying \eqref{infcon} and \eqref{infcon2} to \eqref{thm44eqf} yields the desired result \eqref{thm44dr}.
\end{proof}

\begin{theorem} \label{Thm4}
Let $(\bm{u},\xi,p)$ and $(\bm{u}^{n+1}_h,\xi^{n+1}_h,p^{n+1}_h)$ be the solutions of Eqs. \eqref{c1}-\eqref{c3} and Eqs. \eqref{dd1}-\eqref{dd3}, respectively. Under the assumptions of \text{Theorem \ref{THM3}}, there holds
\begin{align}
    & \frac{1}{\Delta t}\sum_{n=0}^N \left( \| \varepsilon (D_{\bm{u}}^{n+1}) \|_{L^2(\Omega)}^2 + \| D_{\xi}^{n+1} \|_{L^2(\Omega)}^2 + \| D_p^{n+1} \|_{L^2(\Omega)}^2 \right) + \| \nabla e_p^{h,N+1} \|_{L^2(\Omega)}^2
    \nonumber \\
    & \lesssim (\Delta t)^4 \int_{0}^{T} \left( \|\partial_{ttt}\bm{u}\|^2_{H^1(\Omega)} + \|\partial_{ttt}\xi \|^2_{L^2(\Omega)} + \|\partial_{ttt}p \|^2_{L^2(\Omega)} \right) ds
    \nonumber \\
    & \quad + h^{2k} \int_{0}^{T}\left( \|\partial_{t} \bm{u} \|^2_{H^{k+1}(\Omega)} 
    + \|\partial_{t} \xi \|^2_{H^{k}(\Omega)} \right)ds
    + h^{2l+2} \int_{0}^{T}\|\partial_{t} p \|^2_{H^{l+1}(\Omega)}ds. 
\end{align}
\end{theorem}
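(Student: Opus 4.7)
The plan is to mirror the proof of Theorem \ref{Thm2} using the Method 2 error equations already derived in the proof of Theorem \ref{THM3}, namely \eqref{prop2eq1}, \eqref{wintercon22}, and \eqref{prop2eq522}. First, I would subtract the $n$-th from the $(n+1)$-th step of \eqref{prop2eq1} to obtain $a_1(D_{\bm{u}}^{n+1},\bm{v}_h) - b(\bm{v}_h, D_{\xi}^{n+1}) = 0$, then test this equation with $\bm{v}_h = D_{\bm{u}}^{n+1}$, test \eqref{wintercon22} with $\phi_h = D_{\xi}^{n+1}$, and test \eqref{prop2eq522} with $\psi_h = D_p^{n+1}$, finally summing from $n=0$ to $N$. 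The coupling terms on the left-hand side combine via the identity \eqref{Thmid3}, exactly as in Theorem \ref{Thm2}, to produce $\sum_n \bigl(2\mu\|\varepsilon(D_{\bm{u}}^{n+1})\|_{L^2(\Omega)}^2 + c_0\|D_p^{n+1}\|_{L^2(\Omega)}^2 + \tfrac{1}{\lambda}\|\alpha D_p^{n+1} - D_{\xi}^{n+1}\|_{L^2(\Omega)}^2\bigr)$ on the left.

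The key new observation concerns the diffusion contribution $\tfrac{\Delta t}{2}\sum_{n=0}^N d(e_p^{h,n+1}+e_p^{h,n}, D_p^{n+1})$ arising from \eqref{prop2eq522}. Applying identity \eqref{beq2} to the symmetric form $d$ with $u = e_p^{h,n+1}$ and $v = e_p^{h,n}$, each summand equals $d(e_p^{h,n+1},e_p^{h,n+1}) - d(e_p^{h,n},e_p^{h,n})$, so the sum telescopes exactly (rather than by inequality as in Theorem \ref{Thm2}) and, using $e_p^{h,0} = 0$, collapses to $K\|\nabla e_p^{h,N+1}\|_{L^2(\Omega)}^2$. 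This supplies the left-hand side with $\tfrac{K\Delta t}{2}\|\nabla e_p^{h,N+1}\|_{L^2(\Omega)}^2$, which after dividing the final inequality by $\Delta t$ produces exactly the $\|\nabla e_p^{h,N+1}\|_{L^2(\Omega)}^2$ term in the theorem without any $\Delta t$ prefactor. The residuals $T_1,\dots,T_5$ are then bounded by Cauchy--Schwarz, Young's inequality, the Poincar\'e inequality (for $T_4, T_5$), the projection estimates \eqref{po1} and \eqref{po3}, and the Taylor bound \eqref{taylor} applied together with \eqref{thm442}; each satisfies an inequality of the form $T_i \leq \epsilon_j\sum_n (\text{relevant } D\text{-norm squared}) + \tfrac{C}{\epsilon_j}\bigl[(\Delta t)^5\int_0^T\|\partial_{ttt}(\cdot)\|^2\,ds + h^{2k}\Delta t\int_0^T(\|\partial_t\bm{u}\|^2_{H^{k+1}} + \|\partial_t\xi\|^2_{H^k})\,ds + h^{2l+2}\Delta t\int_0^T\|\partial_t p\|^2_{H^{l+1}}\,ds\bigr]$. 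The estimates \eqref{infkorn1}--\eqref{infkorn2} still hold verbatim, since \eqref{prop2eq1} is identical in both methods, and allow absorbing the $\epsilon_j$-pieces into the left-hand side with $\epsilon_1, \epsilon_2$ chosen independent of $\Delta t$.

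The main obstacle is the bookkeeping of the $\Delta t$ factors: the Crank--Nicolson truncation is pointwise $O((\Delta t)^2)$, so \eqref{taylor} with $k = 2$ makes the $L^2$-squared contribution of each residual of order $(\Delta t)^5$ after summation; because the Young constants on the left-hand-side absorption are $\Delta t$-independent here (in contrast with Theorem \ref{THM3}, where the absorbed sum carried a $\Delta t$ factor), the final division by $\Delta t$ converts the $(\Delta t)^5$ into the claimed $(\Delta t)^4$ time-convergence rate. All remaining ingredients---the algebraic identity \eqref{Thmid3}, the discrete inf-sup/Korn bounds \eqref{infkorn1}--\eqref{infkorn2}, and the telescoping argument for the diffusion term---transfer directly from Theorems \ref{Thm2} and \ref{THM3}, so no genuinely new analytical tool is required beyond those already established.
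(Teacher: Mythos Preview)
Your proposal is correct and follows essentially the same route as the paper: the same test functions $(\bm{v}_h,\phi_h,\psi_h)=(D_{\bm u}^{n+1},D_\xi^{n+1},D_p^{n+1})$, the same use of identity \eqref{Thmid3}, the telescoping of the diffusion term via \eqref{beq2}, and the same absorption argument through \eqref{infkorn1}--\eqref{infkorn2}. One small slip: the Poincar\'e inequality is not needed for $T_4,T_5$ here (unlike $J_4,J_5$ in Theorem~\ref{THM3}); you absorb directly against $\sum_n\|D_p^{n+1}\|_{L^2(\Omega)}^2$, exactly as the paper does for $L_4,L_5$.
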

\begin{proof}
Firstly, we note that \eqref{prop2eq1} holds here, which implies \eqref{prop2eq3} can be used here. 
Choosing $\bm{v}_h = D_{\bm{u}}^{n+1}$ in  
\eqref{prop2eq3}, $\phi_h = D_{\xi}^{n+1}$ in \eqref{wintercon22}, $\psi_h = D_{p}^{n+1}$ in \eqref{prop2eq522}, summing over the index
$n$ from $0$ to $N$, and applying the identity \eqref{Thmid3}, we can deduce that
\begin{align}
    & \sum_{n=0}^N \left( 2\mu \| \varepsilon (D_{\bm{u}}^{n+1}) \|_{L^2(\Omega)}^2 + c_0 \| D_p^{n+1} \|_{L^2(\Omega)}^2 + \frac{1}{\lambda} \| \alpha D_p^{n+1} - D_{\xi}^{n+1} \|_{L^2(\Omega)}^2 \right) 
    \nonumber \\
    & \quad + \Delta t \sum_{n=0}^N d(e_p^{h,n+1}+e_p^{h,n},D_{p}^{n+1}) 
    = \sum_{i=1}^5 L_i, \label{prop2eq7}
\end{align}
where
\begin{align*}
    & L_1 = \sum_{n=0}^N b\left( {\bm{u}}^{n+1}-  {\bm{u}}^{n} - \frac{ \Delta t \partial_t {\bm{u}}^{n+1} + \Delta t \partial_t {\bm{u}}^{n} }{2} ,D_{\xi}^{n+1} \right), \\ 
    & L_2 = \sum_{n=0}^N a_2 \left(R_{\xi} {\xi}^{n+1} - R_{\xi}{\xi}^n -  \frac{ \Delta t \partial_t {\xi}^{n+1} + \Delta t \partial_t {\xi}^{n} }{2},D_{\xi}^{n+1} \right), \\ 
    & L_3 = \sum_{n=0}^N c\left( \frac{ \Delta t \partial_t {p}^{n+1} + \Delta t \partial_t {p}^{n} }{2} - R_p {p}^{n+1} + R_p{p}^n , D_{\xi}^{n+1} \right), 
    \\
    & L_4 = \sum_{n=0}^N a_3\left(R_p {p}^{n+1} - R_p{p}^n - \frac{ \Delta t \partial_t {p}^{n+1} + \Delta t \partial_t {p}^{n} }{2} ,D_{p}^{n+1}\right),
    \\
    & L_5 = \sum_{n=0}^N c\left( D_{p}^{n+1}, \frac{ \Delta t \partial_t {\xi}^{n+1} + \Delta t \partial_t {\xi}^{n} }{2} - R_{\xi} {\xi}^{n+1} + R_{\xi}{\xi}^n  \right).
\end{align*}

Next, we bound the terms $L_i$ for $i=1, 2, \cdots, 5$. Applying the Cauchy-Schwarz inequality, the Young's inequality,  \eqref{po1}, \eqref{po3}, \eqref{thm442}, and \eqref{taylor}, we obtain the following estimates with an $\epsilon_1 > 0$ and an $\epsilon_2 > 0$.
\begin{align*}
    L_1 & \leq \frac{\epsilon_1}{3} \sum_{n=0}^N \|D_{\xi}^{n+1}\|_{L^2(\Omega)}^2 + \frac{C}{\epsilon_1}(\Delta t)^5 \int_{0}^{T} \|\partial_{ttt}\bm{u}\|^2_{H^1(\Omega)}ds, \\
    L_2 & \leq \frac{\epsilon_1}{3} \sum_{n=0}^N \|D_{\xi}^{n+1}\|_{L^2(\Omega)}^2 +\frac{C}{\epsilon_1} \bigg[ (\Delta t)^5 \int_{0}^{T} \|\partial_{ttt}\xi \|^2_{L^2(\Omega)}ds 
    \\ & \quad \quad \quad \quad \quad \quad \quad \quad \quad \quad \quad \quad +  h^{2k} \Delta t \int_{0}^{T} \left( \|\partial_{t} \bm{u} \|^2_{H^{k+1}(\Omega)} + \|\partial_{t} \xi \|^2_{H^{k}(\Omega)} \right) ds \bigg], 
    \\
    L_3 & \leq
    \frac{\epsilon_1}{3} \sum_{n=0}^N \|D_{\xi}^{n+1}\|_{L^2(\Omega)}^2 +\frac{C}{\epsilon_1}\bigg[ (\Delta t)^5 \int_{0}^{T} \|\partial_{ttt}p \|^2_{L^2(\Omega)}ds + h^{2l+2} \Delta t \int_{0}^{T} \|\partial_{t} p \|^2_{H^{l+1}(\Omega)}ds \bigg],  
    \\
    L_4 & \leq \frac{\epsilon_2}{2} \sum_{n=0}^N \|D_{p}^{n+1}\|_{L^2(\Omega)}^2 + \frac{C}{\epsilon_2} \bigg[ (\Delta t)^5 \int_{0}^{T} \|\partial_{ttt}p \|^2_{L^2(\Omega)}ds +  h^{2l+2} \Delta t \int_{0}^{T} \|\partial_{t} p \|^2_{H^{l+1}(\Omega)}ds \bigg], 
    \\
    L_5 & \leq \frac{\epsilon_2}{2} \sum_{n=0}^N \|D_{p}^{n+1}\|_{L^2(\Omega)}^2 + \frac{C}{\epsilon_2} \bigg[ (\Delta t)^5 \int_{0}^{T} \|\partial_{ttt}\xi \|^2_{L^2(\Omega)}ds 
    \\ & \quad \quad \quad \quad \quad \quad \quad \quad \quad \quad \quad \quad   + h^{2k}\Delta t \int_{0}^{T} \left( \|\partial_{t} \bm{u} \|^2_{H^{k+1}(\Omega)} + \|\partial_{t} \xi \|^2_{H^{k}(\Omega)} \right) ds \bigg] .
\end{align*}
We note that \eqref{infkorn1} and \eqref{infkorn2} still hold here, and then we handle \eqref{prop2eq7} next. Using the fact $e_{p}^{h,0} = 0$, ignoring the term $c_0 \sum_{n=0}^N \| D_{p}^{n+1} \|^2_{L^2(\Omega)}$, using \eqref{infkorn1} and \eqref{infkorn2} to choose large enough positive $\epsilon_1$ and $\epsilon_2$ such that $\epsilon_1 \| D_{\xi}^{n+1} \|_{L^2(\Omega)}^2 \leq \frac{\mu}{2} \| \varepsilon( D_{\bm{u}}^{n+1} ) \|_{L^2(\Omega)}^2$ and $ \epsilon_2 \| D_{p}^{n+1} \|_{L^2(\Omega)}^2 \leq \frac{1}{2\lambda} \|\alpha D_p^{n+1} - D_{\xi}^{n+1}\|_{L^2(\Omega)}^2 + \frac{\mu}{2} \| \varepsilon( D_{\bm{u}}^{n+1} ) \|_{L^2(\Omega)}^2 $, respectively, we obtain
\begin{align*}
    & \frac{1}{\Delta t} \sum_{n=0}^N \left( \mu \| \varepsilon (D_{\bm{u}}^{n+1}) \|^2_{L^2(\Omega)} + \frac{1}{2\lambda} \| \alpha D_{p}^{n+1} - D_{\xi}^{n+1} \|^2_{L^2(\Omega)} \right)
    + K \| \nabla e_{p}^{h,N+1} \|^2_{L^2(\Omega)}  \nonumber \\
    & \leq \left( \frac{C}{\epsilon_1} + \frac{C}{\epsilon_2} \right) \bigg[ (\Delta t)^4 \int_{0}^{T} \left( \|\partial_{ttt}\bm{u}\|^2_{H^1(\Omega)} + \|\partial_{ttt}\xi \|^2_{L^2(\Omega)} + \|\partial_{ttt}p \|^2_{L^2(\Omega)} \right) ds \nonumber \\ 
    & \quad + h^{2k} \int_{0}^{T} \left( \|\partial_{t} \bm{u} \|^2_{H^{k+1}(\Omega)} + \|\partial_{t} \xi \|^2_{H^{k}(\Omega)} \right) ds
    + h^{2l+2} \int_{0}^{T} \|\partial_{t} p \|^2_{H^{l+1}(\Omega)}ds \bigg].
\end{align*}
Applying \eqref{infkorn1} and \eqref{infkorn2} to the above equation, we claim that the proof is completed.
\end{proof}

\begin{theorem}
\label{THM66}
Let $(\bm{u},\xi,p)$ and $(\bm{u}^{n+1}_h,\xi^{n+1}_h,p^{n+1}_h)$ be the solutions of Eqs. \eqref{c1}-\eqref{c3} and Eqs. \eqref{dd1}-\eqref{dd3}, respectively. Under the assumptions of \text{Theorem \ref{THM3}}, there holds
\begin{align}
    \| \varepsilon ( e_{\bm{u}}^{N+1})\|_{L^2(\Omega)}   
    + \|e_{\xi}^{N+1}\|_{L^2(\Omega)}
    + \|e_{p}^{N+1}\|_{L^2(\Omega)}
    & \lesssim (\Delta t)^2 + h^{k} + h^{l+1},  \\
     \| \nabla e_p^{N+1} \|_{L^2(\Omega)} & \lesssim (\Delta t)^2 + h^{k} + h^{l}.
\end{align}
\end{theorem}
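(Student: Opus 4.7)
The plan is to mirror the structure of Theorem \ref{THM33} exactly, since Theorem \ref{THM66} is the Method 2 analogue of that result. The key observation is that all the hard work has already been done in Theorems \ref{THM3} and \ref{Thm4}, which control the discrete (projection-to-finite-element) part of the error in terms of $(\Delta t)^4 + h^{2k} + h^{2l+2}$. What remains is to lift those bounds from $e^{h,N+1}_{\cdot}$ to the full error $e^{N+1}_{\cdot}$ via the splitting already introduced at the start of Section \ref{Sec4}, namely $e^{N+1}_{\bm{u}} = e^{I,N+1}_{\bm{u}} + e^{h,N+1}_{\bm{u}}$, and similarly for $\xi$ and $p$.

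The first step is to apply the triangle inequality in each norm appearing on the left-hand sides of the two stated estimates. This reduces everything to bounding the interpolation parts $e^{I,N+1}_{\bm{u}}$, $e^{I,N+1}_{\xi}$, $e^{I,N+1}_{p}$ in the corresponding norms, plus bounding the discrete parts $e^{h,N+1}_{\bm{u}}$, $e^{h,N+1}_{\xi}$, $e^{h,N+1}_{p}$ in the same norms.

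For the projection part, I would invoke the Stokes-projection and elliptic-projection estimates \eqref{po1}, \eqref{po2}, \eqref{po3}: under the regularity hypotheses of Theorem \ref{THM3}, these give
$\|\varepsilon(e^{I,N+1}_{\bm{u}})\|_{L^2(\Omega)} + \|e^{I,N+1}_{\xi}\|_{L^2(\Omega)} \lesssim h^k$, $\|e^{I,N+1}_p\|_{L^2(\Omega)} \lesssim h^{l+1}$, and $\|\nabla e^{I,N+1}_p\|_{L^2(\Omega)} \lesssim h^l$. For the discrete part, I would take square roots in the bounds delivered by Theorems \ref{THM3} and \ref{Thm4}, which yield $\|\varepsilon(e^{h,N+1}_{\bm{u}})\|_{L^2(\Omega)} + \|e^{h,N+1}_{\xi}\|_{L^2(\Omega)} + \|e^{h,N+1}_p\|_{L^2(\Omega)} \lesssim (\Delta t)^2 + h^k + h^{l+1}$, together with the corresponding bound on $\|\nabla e^{h,N+1}_p\|_{L^2(\Omega)}$.

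Summing the interpolation and discrete contributions then gives the $L^2$-type estimate $(\Delta t)^2 + h^k + h^{l+1}$ for the triple $(\varepsilon(e^{N+1}_{\bm{u}}), e^{N+1}_{\xi}, e^{N+1}_p)$, while the $H^1$ estimate for the pressure degrades to $(\Delta t)^2 + h^k + h^l$ because the interpolation error $\|\nabla e^{I,N+1}_p\|_{L^2(\Omega)}$ is only $O(h^l)$. There is no real obstacle here; the proof is a clean application of the triangle inequality once Theorems \ref{THM3} and \ref{Thm4} are in hand. The only point that warrants care is keeping track of which term (interpolation versus discrete) dictates the exponent of $h$ in each norm, particularly for the $\nabla e_p$ bound where the $h^l$ term comes from the interpolation side rather than from the scheme.
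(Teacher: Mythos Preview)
Your proposal is correct and matches the paper's own proof almost verbatim: the paper simply invokes the projection estimates \eqref{po1}--\eqref{po3}, the triangle inequality, and Theorems \ref{THM3} and \ref{Thm4} to conclude. Your additional remark about tracking which side (interpolation versus discrete) dictates the $h^l$ term in the $\nabla e_p$ bound is a helpful clarification that the paper leaves implicit.
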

\begin{proof}
We start with \eqref{po1}, \eqref{po2}, and \eqref{po3}. Applying the triangle inequality, Theorem \ref{THM3}, and Theorem \ref{Thm4}, 
we see that the above error estimates readily follow.  
\end{proof}


\section{Benchmark tests} \label{Sec5}
In this section, we present numerical experiments in two dimensions to validate the theoretical predictions described in Section \ref{Sec4}. 
All computations are implemented by using the open-source software FreeFEM++  \cite{2013New}.

\
\\
\noindent
{\bf Example 1.} Let the domain $\Omega=[0,1]^2$ and the final time is $T=1.0$. We choose the body force $\bm{f}$, the source/sink term $Q_s$, initial conditions and Dirichlet boundary data on $\partial \Omega = \Gamma_d = \Gamma_p$ such that the exact solution is as follows:
\begin{align*}
    u_1 = \frac{1}{10} e^t (x+y^3), \quad
    u_2 = \frac{1}{10} t^2 (x^3+y^3), \quad
    p = 10e^{\frac{x+y}{10}}(1+t^3).
\end{align*}
Following \cite{chaabane2018splitting}, the physical parameters are given by:
\begin{align*}
    \mu = 1.0, \quad \lambda = 1.0, \quad c_0 = 1.0, \quad \alpha = 1.0, \quad K = 1.0.
\end{align*}
We apply a small mesh size $h = \frac{1}{64}$ and take polynomial orders $k = 3$, $l = 2$ for the spatial discretization so that the spatial error is not dominant. To check the orders of convergence in time, we only refine the time step size $\Delta t$. In Table \ref{E11} and Table \ref{E12}, we present the results of errors and convergences rates for Method 1 and Method 2, respectively. We observe that the orders of $\bm{H}^1$ error of $\bm{u}$, $L^2$ error of $\xi$, $L^2$ and $H^1$ errors of $p$ are all around $1$ in Table \ref{E11}, and are all around $2$ in Table \ref{E12}. The results in both tables illustrate that the time error order based on Method 1 is $\mathcal{O} (\Delta t)$ and the the time error order based on Method 2 is $\mathcal{O} ((\Delta t)^2)$,  which verify the theoretical predictions of error analyses in Theorem \ref{THM33} and Theorem \ref{THM66}.

\begin{table}[H]
\begin{center}
\caption{Errors and convergence rates given by Method 1 for Example 1.}
\label{E11}
\centering
{\scriptsize
	\begin{tabular}{cclclcl}
	\hline
	$\Delta t$ & \multicolumn{1}{l}{$\bm{H}^1$ errors of $\bm{u}$} & \multicolumn{1}{l}{Orders} & \multicolumn{1}{l}{$L^2$ errors of $\xi$} & \multicolumn{1}{l}{Orders} & \multicolumn{1}{l}{$L^2$\& $H^1$ errors of $p$} & \multicolumn{1}{l}{Orders} \\ \hline
    1/4 & 5.219e-02 &       & 2.754e-01 &      & 2.971e-01 \& 1.386e+00 &  \\
    1/8 & 2.735e-02 &  0.93 & 1.443e-01 & 0.93 & 1.557e-01 \& 7.263e-01 & 0.93 \& 0.93 \\
    1/16 & 1.399e-02 & 0.97 & 7.381e-02 & 0.97 & 7.963e-02 \& 3.715e-01 & 0.97 \& 0.97 \\
    1/32 & 7.076e-03 & 0.98 & 3.732e-02 & 0.98 & 4.026e-02 \& 1.878e-01 & 0.98 \& 0.98 \\\hline
	\end{tabular}
}
\end{center}
\end{table}

\begin{table}[H]
\begin{center}
\caption{Errors and convergence rates given by Method 2 for Example 1.}
\label{E12}
\centering
{\scriptsize
	\begin{tabular}{cclclcl}
	\hline
	$\Delta t$ & \multicolumn{1}{l}{$\bm{H}^1$ errors of $\bm{u}$} & \multicolumn{1}{l}{Orders} & \multicolumn{1}{l}{$L^2$ errors of $\xi$} & \multicolumn{1}{l}{Orders} & \multicolumn{1}{l}{$L^2$\& $H^1$ errors of $p$} & \multicolumn{1}{l}{Orders} \\ \hline
    1/4 & 2.630e-03  &      & 1.266e-02 &      & 1.385e-02 \& 6.333e-02 &  \\
    1/8 & 6.426e-04  & 2.03 & 3.296e-03 & 1.94 & 3.570e-03 \& 1.653e-02 & 1.96 \& 1.94 \\
    1/16 & 1.587e-04 & 2.02 & 8.278e-04 & 1.99 & 8.944e-04 \& 4.159e-03 & 2.00 \& 1.99 \\
    1/32 & 3.959e-05 & 2.00 & 2.071e-04 & 2.00 & 2.237e-04 \& 1.041e-03 & 2.00 \& 2.00 \\ \hline
	\end{tabular}
}
\end{center}
\end{table}

\
\\
\noindent
{\bf Example 2.} Let the domain $\Omega=[0,1]^2$ with $\Gamma_1=\{(1,y); 0\leq y\leq1\}$, $\Gamma_2=\{(x,0);0\leq x\leq1\}$,
$\Gamma_3=\{(0,y);0\leq y\leq1\}$,  $\Gamma_4=\{(x,1);0\leq x\leq1\}$ and the final time is $T=1.0$. The Neumann boundary $\Gamma_t = \Gamma_f = \Gamma_1 \cup \Gamma_3$ and the Dirichlet boundary $\Gamma_d = \Gamma_p = \Gamma_2 \cup \Gamma_4$ are considered in this example. We take the body force $\bm{f}$, the source/sink term $Q_s$, initial and boundary conditions such that the exact solution is as follows:
\begin{align*}
    u_1 & = e^{-t} \left( \sin{(2\pi y)} (\cos{(2\pi x)}-1) + \frac{1}{\mu+\lambda} \sin{(\pi x)}\sin{(\pi y)} \right), \\
    u_2 & = e^{-t} \left( \sin{(2\pi x)} (1-\cos{(2\pi y)}) + \frac{1}{\mu+\lambda} \sin{(\pi x)}\sin{(\pi y)} \right), \\
    p & = e^{-t}\sin{(\pi x)}\sin{(\pi y)}.
\end{align*}
The fixed physical parameters are:
\begin{align*}
    E = 1.0, \quad c_0 = 1.0, \quad \alpha = 1.0.
\end{align*}
Other physical parameters will vary to test the robustness of our numerical schemes. Numerical results for this example are summarized in Tables \ref{E21}-\ref{E34}. Among them, we examine Method 1 in Tables \ref{E21}, \ref{E22}, \ref{E31}, \ref{E32}, and Method 2 in Tables \ref{E23}, \ref{E24}, \ref{E33}, \ref{E34}. Since we have already verified the error orders in time for both schemes in Example 1, our focus here is the verification of the spatial error orders. To verify the spatial error orders as analyzed in Theorem \ref{THM33}, we take $\Delta t$ of an order $\mathcal{O}(h^2)$ (Table \ref{E21} and Table \ref{E31}) or $\mathcal{O}(h^3)$ (Table \ref{E22} and Table \ref{E32}) for Method 1. Similarly, to verify the spatial error orders as analyzed in Theorem \ref{THM66}, we take $\Delta t$ of an order $\mathcal{O}(h)$ (Table \ref{E23} and Table \ref{E33}) or $\mathcal{O}(h^2)$ (Table \ref{E24} and Table \ref{E34}) for Method 2. 

Firstly, we fix $\nu = 0.3$ and $K = 1.0$. The numerical results for errors and convergence orders using $k=2$, $l=1$ and $k=3$, $l=2$ are presented in Table \ref{E21}, \ref{E23} and Table \ref{E22}, \ref{E24}, respectively. We refine the mesh size (and vary the corresponding time step size) to present the numerical results. 
From Table \ref{E21}, it is clearly shown that the convergence $\|e_{\bm{u}}^{N+1}\|_{H^1(\Omega)}$, $\|e_{\xi}^{N+1}\|_{L^2(\Omega)}$, $\|e_p^{N+1}\|_{L^2(\Omega)}$ are of order $\mathcal{O}(\Delta t + h^2)$, and $\|e_p^{N+1}\|_{H^1(\Omega)}$ is of order $\mathcal{O}(\Delta t + h)$ for Method 1. Similarly, from Table \ref{E22}, we see that the convergence $\|e_{\bm{u}}^{N+1}\|_{H^1(\Omega)}$, $\|e_{\xi}^{N+1}\|_{L^2(\Omega)}$, $\|e_p^{N+1}\|_{L^2(\Omega)}$ are of order $\mathcal{O}(\Delta t + h^3)$, and $\|e_p^{N+1}\|_{H^1(\Omega)}$ is of order $\mathcal{O}(\Delta t + h^2)$. 
From Table \ref{E23}, we see that the convergence $\|e_{\bm{u}}^{N+1}\|_{H^1(\Omega)}$, $\|e_{\xi}^{N+1}\|_{L^2(\Omega)}$, $\|e_p^{N+1}\|_{L^2(\Omega)}$ are of order $\mathcal{O}((\Delta t)^2 + h^2)$, and $\|e_p^{N+1}\|_{H^1(\Omega)}$ is of order $\mathcal{O}((\Delta t)^2 + h)$ for Method 2. Moreover, from Table \ref{E24}, we see that the convergence $\|e_{\bm{u}}^{N+1}\|_{H^1(\Omega)}$, $\|e_{\xi}^{N+1}\|_{L^2(\Omega)}$, $\|e_p^{N+1}\|_{L^2(\Omega)}$ are of order $\mathcal{O}((\Delta t)^2 + h^3)$, and $\|e_p^{N+1}\|_{H^1(\Omega)}$ is of order $\mathcal{O}((\Delta t)^2 + h^2)$ .


\begin{table}[H]
\begin{center}
\caption{Errors and convergence rates given by Method 1 for Example 2 using $k=2$ and $l=1$ with $\nu = 0.3$ and $K=1$.}
\label{E21}
\centering
{\scriptsize
	\begin{tabular}{ccclclcl}
	\hline
	$h$  &  $\Delta t$  & \multicolumn{1}{l}{ $\bm{H}^1$ errors of $\bm{u}$} & \multicolumn{1}{l}{Orders} & \multicolumn{1}{l}{$L^2$ errors of $\xi$} & \multicolumn{1}{l}{Orders} & \multicolumn{1}{l}{$L^2$\& $H^1$ errors of $p$} & \multicolumn{1}{l}{Orders} \\ \hline
    1/2 & 1/4    & 1.715e+00 &  & 8.136e-02 &  & 7.877e-02 \& 6.165e-01 &  \\
    1/4 & 1/16   & 5.207e-01 & 1.72 & 3.801e-02 & 1.10 & 2.475e-02 \& 3.398e-01 & 1.67 \& 0.86 \\
    1/8 & 1/64   & 1.436e-01 & 1.86 & 8.062e-03 & 2.24 & 6.939e-03 \& 1.774e-01 & 1.83 \& 0.94 \\
    1/16 & 1/256 & 3.713e-02 & 1.95 & 1.894e-03 & 2.09 & 1.805e-03 \& 8.992e-02 & 1.94 \& 0.98 \\ \hline
	\end{tabular}
}
\end{center}
\end{table}
\begin{table}[H]
\begin{center}
\caption{Errors and convergence rates given by Method 1 for Example 2 using $k=3$ and $l=2$ with $\nu = 0.3$ and $K=1$.}
\label{E22}
\centering
{\scriptsize
	\begin{tabular}{ccclclcl}
	\hline
	$h$ & $\Delta t$                    & \multicolumn{1}{l}{ $\bm{H}^1$ errors of $\bm{u}$} & \multicolumn{1}{l}{Orders} & \multicolumn{1}{l}{$L^2$ errors of $\xi$} & \multicolumn{1}{l}{Orders} & \multicolumn{1}{l}{$L^2$\& $H^1$ errors of $p$} & \multicolumn{1}{l}{Orders} \\ \hline
    1/2 & 1/8     & 5.685e-01 &  & 4.381e-02 &  & 5.164e-03 \& 6.914e-02 &  \\
    1/4 & 1/64    & 7.997e-02 & 2.83 & 4.889e-03 & 3.16 & 1.034e-03 \& 3.271e-02 & 2.32 \& 1.08 \\
    1/8 & 1/512   & 9.833e-03 & 3.02 & 5.956e-04 & 3.04 & 1.364e-04 \& 9.259e-03 & 2.92 \& 1.82 \\
    1/16 & 1/4096 & 1.205e-03 & 3.03 & 7.267e-05 & 3.03 & 1.756e-05 \& 2.423e-03 & 2.96 \& 1.93 \\
    \hline
	\end{tabular}
}
\end{center}
\end{table}
\begin{table}[H]
\begin{center}
\caption{Errors and convergence rates given by Method 2 for Example 2 using $k=2$ and $l=1$ with $\nu = 0.3$ and $K=1$.}
\label{E23}
\centering
{\scriptsize
	\begin{tabular}{ccclclcl}
	\hline
	$h$ & $\Delta t$              & \multicolumn{1}{l}{ $\bm{H}^1$ errors of $\bm{u}$} & \multicolumn{1}{l}{Orders} & \multicolumn{1}{l}{$L^2$ errors of $\xi$} & \multicolumn{1}{l}{Orders} & \multicolumn{1}{l}{$L^2$\& $H^1$ errors of $p$} & \multicolumn{1}{l}{Orders} \\ \hline
    1/2 & 1/2   & 1.716e+00 &      & 8.492e-02 &      & 9.235e-02 \& 9.311e-01 &  \\
    1/4 & 1/4   & 5.206e-01 & 1.72 & 3.729e-02 & 1.19 & 2.195e-02 \& 3.683e-01 & 2.07 \& 1.34 \\
    1/8 & 1/8   & 1.436e-01 & 1.86 & 8.080e-03 & 2.21 & 6.977e-03 \& 1.802e-01 & 1.65 \& 1.03 \\
    1/16 & 1/16 & 3.713e-02 & 1.95 & 1.905e-03 & 2.08 & 1.828e-03 \& 9.023e-02 & 1.93 \& 1.00 \\ \hline
	\end{tabular}
}
\end{center}
\end{table}
\begin{table}[H]
\begin{center}
\caption{Errors and convergence rates given by Method 2 for Example 2 using $k=3$ and $l=2$ with $\nu = 0.3$ and $K=1$.}
\label{E24}
\centering
{\scriptsize
	\begin{tabular}{ccclclcl}
	\hline
	$h$ & $\Delta t$                     & \multicolumn{1}{l}{ $\bm{H}^1$ errors of $\bm{u}$} & \multicolumn{1}{l}{Orders} & \multicolumn{1}{l}{$L^2$ errors of $\xi$} & \multicolumn{1}{l}{Orders} & \multicolumn{1}{l}{$L^2$\& $H^1$ errors of $p$} & \multicolumn{1}{l}{Orders} \\ \hline
    1/2 & 1/4    & 5.686e-01 &      & 4.377e-02 &      & 7.038e-03 \& 1.112e-01 &  \\
    1/4 & 1/16   & 7.997e-02 & 2.83 & 4.872e-03 & 3.17 & 9.834e-04 \& 3.449e-02 & 2.84 \& 1.69 \\
    1/8 & 1/64   & 9.833e-03 & 3.02 & 5.953e-04 & 3.03 & 1.312e-04 \& 9.258e-03 & 2.91 \& 1.90 \\
    1/16 & 1/256 & 1.205e-03 & 3.03 & 7.262e-05 & 3.04 & 1.688e-05 \& 2.423e-03 & 2.96 \& 1.93 \\ \hline
	\end{tabular}
}
\end{center}
\end{table}
\noindent

Secondly, we fix $\nu = 0.49999$ and $K = 10^{-6}$ to test the robustness of the proposed schemes with respect to the key physical parameters. The numerical results for errors and convergence orders using $k=2$, $l=1$ and $k=3$, $l=2$ are presented in Table \ref{E31}, \ref{E33} and Table \ref{E32}, \ref{E34}, respectively.
By checking the error results and convergence rates one table by one table, one can verify the theoretical analysis provided in Section 4. From these tables, it is shown clearly that all energy norm errors decrease with the optimal convergence orders.
By comparing the results in Tables \ref{E21}-\ref{E24} with the corresponding results in Tables \ref{E31}-\ref{E34}, we conclude that our schemes are robust with respect to the Poisson ratio $\nu$ and the hydraulic conductivity $K$.
\begin{table}[H]
\begin{center}
\caption{Errors and convergence rates given by Method 1 for Example 2 using $k=2$ and $l=1$ with $\nu = 0.49999$ and $K=10^{-6}$.}
\label{E31}
\centering
{\scriptsize
	\begin{tabular}{ccclclcl}
	\hline
	$h$  &  $\Delta t$  & \multicolumn{1}{l}{ $\bm{H}^1$ errors of $\bm{u}$} & \multicolumn{1}{l}{Orders} & \multicolumn{1}{l}{$L^2$ errors of $\xi$} & \multicolumn{1}{l}{Orders} & \multicolumn{1}{l}{$L^2$\& $H^1$ errors of $p$} & \multicolumn{1}{l}{Orders} \\ \hline
    1/2 & 1/4    & 1.712e+00 &  & 9.575e-02 &  & 2.071e-01 \& 9.962e-01 &  \\
    1/4 & 1/16   & 5.242e-01 & 1.71 & 7.503e-02 & 0.35 & 5.429e-02 \& 3.928e-01 & 1.93 \& 1.34 \\
    1/8 & 1/64   & 1.433e-01 & 1.87 & 1.186e-02 & 2.66 & 1.416e-02 \& 1.853e-01 & 1.94 \& 1.08 \\
    1/16 & 1/256 & 3.696e-02 & 1.95 & 2.478e-03 & 2.26 & 3.594e-03 \& 9.127e-02 & 1.98 \& 1.02 \\ \hline
	\end{tabular}
}
\end{center}
\end{table}
\begin{table}[H]
\begin{center}
\caption{Errors and convergence rates given by Method 1 for Example 2 using $k=3$ and $l=2$ with $\nu = 0.49999$ and $K=10^{-6}$.}
\label{E32}
\centering
{\scriptsize
	\begin{tabular}{ccclclcl}
	\hline
	$h$  &  $\Delta t$  & \multicolumn{1}{l}{ $\bm{H}^1$ errors of $\bm{u}$} & \multicolumn{1}{l}{Orders} & \multicolumn{1}{l}{$L^2$ errors of $\xi$} & \multicolumn{1}{l}{Orders} & \multicolumn{1}{l}{$L^2$\& $H^1$ errors of $p$} & \multicolumn{1}{l}{Orders} \\ \hline
    1/2 & 1/8     & 5.586e-01 &      & 1.466e-01 &      & 1.925e-02 \& 1.681e-01 &  \\
    1/4 & 1/64    & 8.014e-02 & 2.80 & 1.025e-02 & 3.84 & 2.906e-03 \& 5.269e-02 & 2.73 \& 1.67 \\
    1/8 & 1/512   & 9.895e-03 & 3.02 & 1.246e-03 & 3.04 & 3.759e-04 \& 1.351e-02 & 2.95 \& 1.96 \\
    1/16 & 1/4096 & 1.212e-03 & 3.03 & 1.416e-04 & 3.14 & 4.550e-05 \& 3.122e-03 & 3.05 \& 2.11 \\ \hline
	\end{tabular}
}
\end{center}
\end{table}
\begin{table}[H]
\begin{center}
\caption{Errors and convergence rates given by Method 2 for Example 2 using $k=2$ and $l=1$ with $\nu = 0.49999$ and $K=10^{-6}$.}
\label{E33}
\centering
{\scriptsize
	\begin{tabular}{ccclclcl}
	\hline
	$h$ & $\Delta t$                    & \multicolumn{1}{l}{ $\bm{H}^1$ errors of $\bm{u}$} & \multicolumn{1}{l}{Orders} & \multicolumn{1}{l}{$L^2$ errors of $\xi$} & \multicolumn{1}{l}{Orders} & \multicolumn{1}{l}{$L^2$\& $H^1$ errors of $p$} & \multicolumn{1}{l}{Orders} \\ \hline
    1/2 & 1/2   & 1.712e+00 &      & 9.575e-02 &      & 2.434e-01 \& 1.014e+00 &  \\
    1/4 & 1/4   & 5.242e-01 & 1.71 & 7.503e-02 & 0.35 & 6.470e-02 \& 4.003e-01 & 1.91 \& 1.34 \\
    1/8 & 1/8   & 1.433e-01 & 1.87 & 1.186e-02 & 2.66 & 1.680e-02 \& 1.866e-01 & 1.95 \& 1.10 \\
    1/16 & 1/16 & 3.696e-02 & 1.95 & 2.478e-03 & 2.26 & 4.253e-03 \& 9.144e-02 & 1.98 \& 1.03 \\
    \hline
	\end{tabular}
}
\end{center}
\end{table}
\begin{table}[H]
\begin{center}
\caption{Errors and convergence rates given by Method 2 for Example 2 using $k=3$ and $l=2$ with $\nu = 0.49999$ and $K=10^{-6}$.}
\label{E34}
\centering
{\scriptsize
	\begin{tabular}{ccclclcl}
	\hline
	$h$ & $\Delta t$                    & \multicolumn{1}{l}{ $\bm{H}^1$ errors of $\bm{u}$} & \multicolumn{1}{l}{Orders} & \multicolumn{1}{l}{$L^2$ errors of $\xi$} & \multicolumn{1}{l}{Orders} & \multicolumn{1}{l}{$L^2$\& $H^1$ errors of $p$} & \multicolumn{1}{l}{Orders} \\ \hline
    1/2 & 1/4    & 5.586e-01 &      & 1.466e-01 &      & 1.383e-02 \& 1.538e-01 &  \\
    1/4 & 1/16   & 8.014e-02 & 2.80 & 1.025e-02 & 3.84 & 2.314e-03 \& 5.242e-02 & 2.58 \& 1.55 \\
    1/8 & 1/64   & 9.895e-03 & 3.02 & 1.246e-03 & 3.04 & 2.675e-04 \& 1.351e-02 & 3.11 \& 1.96 \\
    1/16 & 1/256 & 1.212e-03 & 3.03 & 1.416e-04 & 3.14 & 2.800e-05 \& 3.123e-03 & 3.26 \& 2.11 \\
    \hline
	\end{tabular}
}
\end{center}
\end{table}

\section{Conclusions and outlook}
\label{conclusion}

In this paper, we present a priori estimates of the two fully discrete coupled schemes for the three-field formulation of Biot’s consolidation model. The theoretical results show that both schemes are unconditionally convergent with optimal error orders. We comment here that Method 2 achieves a second-order convergence in time without complicating the calculations. The detailed numerical experiments are carried out to verify the predictions of error estimates. In future work, we plan to develop some decoupled algorithms \cite{ju2020parameter, gu2022iterative} and the corresponding analysis based on the theory studied in this work.

\section{Acknowledgements}
The works of H. Gu and J. Li are partially supported by the NSF of China No. 11971221 and the Shenzhen Sci-Tech Fund No. RCJC20200714114556020, JCYJ20170818153840322 and JCYJ20190809150413261, and Guangdong Provincial Key Laboratory of Computational Science and Material Design No. 2019B030301001. The work of M. Cai is supported in part by NIH-BUILD grant through UL1GM118973, NIH-RCMI grant through U54MD013376, and the National Science Foundation awards (1700328, 1831950). The work of G. Ju is supported in part by the National Key R \& D Program of China (2017YFB1001604). 





\end{document}